\definecolor{mediumblue}{rgb}{0.0, 0.0, 0.8}
\colorlet{darkgreen}{green!50!black}
\renewcommand{\geq}{\geqslant}
\renewcommand{\leq}{\leqslant}
\renewcommand{\trianglerighteq}{\trianglerighteqslant}
\tikzset{wei/.style=
{red,double=red,double
distance=1pt}}
\tikzset{wei2/.style={red,double=red,double
distance=1pt}}
\numberwithin{equation}{section}
\newtheorem{thm}{Theorem}[section]
\newtheorem{cor}[thm]{Corollary}
\newtheorem*{ack}{Acknowledgements}
\newtheorem*{prop*}{Proposition}
\newtheorem*{thmA*}{Theorem A}
\newtheorem*{thmB*}{Theorem B}
\newtheorem*{thmC*}{Theorem C}
\newtheorem*{cor*}{Corollary}
\newtheorem*{conj*}{Semisimplicity  Conjecture}
\theoremstyle{remark}
\newtheorem*{Acknowledgements*}{Acknowledgements}
\theoremstyle{definition}
\newtheorem{eg}[thm]{Example}
\newcommand{\SStd}{{\rm SStd}}
\newcommand{\PStd}{{\rm PStd}}
\newcommand{\la}{\lambda}
 \newcommand{\SSTS}{\mathsf{S}}
 \newcommand{\SSTt}{\mathsf{t}}
\newcommand{\SSTT}{\mathsf{T}}  
\newcommand{\NN}{{\mathbb N}}
\tikzset{
    ultra thin/.style= {line width=0.05pt},
    very thin/.style=  {line width=0.2pt},
    thin/.style=       {line width=0.1pt},
    semithick/.style=  {line width=0.6pt},
    thick/.style=      {line width=0.8pt},
    very thick/.style= {line width=1.2pt},
    ultra thick/.style={line width=1.6pt}
}
\crefname{defn}{Definition}{Definitions}
\crefname{thm}{Theorem}{Theorems}
\crefname{prop}{Proposition}{Propositions}
\crefname{lem}{Lemma}{Lemmas}
\crefname{cor}{Corollary}{Corollaries}
\crefname{conj}{Conjecture}{Conjectures}
\crefname{section}{Section}{Sections}
\crefname{subsection}{Subsection}{Subsections}
\crefname{eg}{Example}{Examples}
\crefname{figure}{Figure}{Figures}
\crefname{rem}{Remark}{Remarks}
\crefname{rmk}{Remark}{Remarks}
\crefname{equation}{equation}{equation}
\Crefname{defn}{Definition}{Definitions}
\Crefname{thm}{Theorem}{Theorems}
\Crefname{prop}{Proposition}{Propositions}
\Crefname{lem}{Lemma}{Lemmas}
\Crefname{cor}{Corollary}{Corollaries}
\Crefname{conj}{Conjecture}{Conjectures}
\Crefname{section}{Section}{Sections}
\Crefname{subsection}{Subsection}{Subsections}
\Crefname{eg}{Example}{Examples}
\Crefname{figure}{Figure}{Figures}
\Crefname{rem}{Remark}{Remarks}
\Crefname{rmk}{Remark}{Remarks}
 \newlength{\mylen}
\newcommand\Item[1][]{%
  \ifx\relax#1\relax  \item \else \item[#1] \fi
  \abovedisplayskip=0pt\abovedisplayshortskip=0pt~\vspace*{-\baselineskip}}
\def\Item{\item\abovedisplayskip=0pt\abovedisplayshortskip=5pt~\vspace*{-\baselineskip}}
\begin{document}

\title[The uniqueness of  plethystic  factorisation]{The uniqueness of   plethystic  factorisation} 
\author{Chris Bowman}
\author{Rowena Paget}
 
\maketitle
 
\section*{Introduction}
Let $s_\lambda$ and $s_\mu$ denote the Schur functions labelled by the partitions $\la$ and $\mu$.  
There are three ways of ``multiplying" this pair of functions together in order to obtain a new symmetric function; these are the   Littlewood--Richardson, Kronecker, and plethysm products. 
 The primary purpose of this  paper is to address the most fundamental question one can ask of  such a  product: {\em ``does  
it  factorise uniquely?"}.    
For the Littlewood--Richardson product, this question  was answered by  Rajan  \cite{Rajan}.  
We solve this question  for   the most difficult  and mysterious 
    of    these products, the plethysm product (which we denote $\circ$) as follows.
 \begin{thmA*}
 Let $\mu,\nu,  \pi,\rho$ be arbitrary partitions.  
   If 
 $ 
 s_\nu\circ s_\mu = s_\rho \circ s_\pi
  $
 then either $\nu=\rho$ and $\mu=\pi$;  
  or we are  in one of five exceptional cases, 
$$\begin{array}{llll}
& s_{(2,1^2)}\circ s_{(1)}= s_{(1^2)}\circ s_{(1^2)},
\quad\quad\quad
& s_{(3,1)}\circ s_{(1)}= s_{(1^2)}\circ s_{(2)},
\\
 &s_{(2,1^2)}\circ s_{(2)}  = 
s_{(1^2)}\circ s_{(3,1)},   &  
s_{(2,1^2)}\circ s_{(1^2)}  = 
s_{(1^2)}\circ s_{(2,1^2)}, \\
&s_\nu \circ s_{(1)}=s_{(1)}\circ s_\nu.
\end{array}$$
   \end{thmA*}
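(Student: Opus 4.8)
The plan is to reconstruct the pairs $(\nu,\mu)$ and $(\rho,\pi)$ from a short list of invariants of the symmetric function $F:=s_\nu\circ s_\mu=s_\rho\circ s_\pi$, reducing everything to one genuinely separate branch together with a finite verification. Write $\mu\vdash m$, $\nu\vdash n$, $\pi\vdash b$, $\rho\vdash a$; we may assume all four partitions are non-empty, and, since the identity is symmetric under exchanging the two pairs, that $m\le b$. I would first record the relevant invariants of $F$, each of which must coincide when computed from $(\nu,\mu)$ or from $(\rho,\pi)$: the degree, giving $mn=ab=:D$; the largest exponent of a single variable occurring in $F$, which equals $n\mu_1$ (the first part of the lexicographically leading Schur constituent), and --- via the involution $\omega$, which sends $s_\nu\circ s_\mu$ to $s_\nu\circ s_{\mu'}$ or to $s_{\nu'}\circ s_{\mu'}$ according to the parity of $m$ --- the largest number of parts of a constituent, which equals $n\,\ell(\mu)$; the full leading constituent $s_{\Lambda(\nu,\mu)}$, where a tableau computation gives $\Lambda(\nu,\mu)=\sum_i\nu_i\beta_i$ with $\beta_1\succeq\beta_2\succeq\cdots$ the lexicographically ordered list (with multiplicities) of contents of semistandard tableaux of shape $\mu$, together with its multiplicity, and likewise the trailing constituent $s_{\lambda(\nu,\mu)}$; and the principal specialisation $k\mapsto F(1^k)=s_\nu\big(1^{s_\mu(1^k)}\big)$, a polynomial in $k$ of degree $D$ with leading coefficient $\big(\prod_{(i,j)\in\nu}h(i,j)\big)^{-1}\big(\prod_{(i,j)\in\mu}h(i,j)\big)^{-n}$ whose non-negative integer roots form an initial segment $\{0,1,\dots,k_0\}$ of length at least $\ell(\mu)$ (they are the $k$ with $s_\mu(1^k)<\ell(\nu)$). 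In particular $mn=ab$, $n\mu_1=a\pi_1$, $n\,\ell(\mu)=a\,\ell(\pi)$, the hook-length identity $\big(\prod_{\nu}h\big)\big(\prod_{\mu}h\big)^{n}=\big(\prod_{\rho}h\big)\big(\prod_{\pi}h\big)^{a}$, and $\Lambda(\nu,\mu)=\Lambda(\rho,\pi)$, $\lambda(\nu,\mu)=\lambda(\rho,\pi)$ with matching multiplicities.

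Next I would dispose of the branch $m=1$. Then $\mu=(1)$, so $F=s_\nu$ and $s_\rho\circ s_\pi$ is a single Schur function. If $b=1$ then $\nu=\rho$ and $(\nu,\mu)=(\rho,\pi)$; if $a=1$ then $\pi=\nu$, giving the family $s_\nu\circ s_{(1)}=s_{(1)}\circ s_\nu$; and if $a,b\ge2$ then both the leading constituent $s_{\Lambda(\rho,\pi)}$ and the trailing constituent $s_{\lambda(\rho,\pi)}$ occur in the one-term sum $s_\rho\circ s_\pi$, so $\Lambda(\rho,\pi)=\lambda(\rho,\pi)$; feeding in the explicit extremal shapes, together with the numerical constraints to bound $a,b$, leaves only $(\rho,\pi)\in\{((1^2),(2)),((1^2),(1^2))\}$, and since $s_{(1^2)}\circ s_{(2)}=s_{(3,1)}$ and $s_{(1^2)}\circ s_{(1^2)}=s_{(2,1^2)}$ these are precisely the identities $s_{(3,1)}\circ s_{(1)}=s_{(1^2)}\circ s_{(2)}$ and $s_{(2,1^2)}\circ s_{(1)}=s_{(1^2)}\circ s_{(1^2)}$.

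In the remaining branch $2\le m\le b$ I would argue that the invariants above confine every non-trivial solution --- i.e.\ one with $\mu\neq\pi$, which by the following remark is the same as $\nu\neq\rho$ --- to finitely many quadruples in which $m$ and $b$, hence also $n=D/m$, $a=D/b$, $\mu_1,\ell(\mu),\pi_1,\ell(\pi)$, are bounded by an explicit small constant; a direct computation of the two plethysms for each such quadruple then leaves exactly $s_{(2,1^2)}\circ s_{(2)}=s_{(1^2)}\circ s_{(3,1)}$ and $s_{(2,1^2)}\circ s_{(1^2)}=s_{(1^2)}\circ s_{(2,1^2)}$. The remark is that a solution with $\mu=\pi$ has $n=a$ by $mn=ab$, whence $\nu=\rho$ because the plethysms $\{s_\nu\circ s_\mu:\nu\vdash n\}$ are linearly independent; conversely $\nu=\rho$ forces $m=b$ and then $\mu=\pi$ because $\{s_\nu\circ s_\mu:\mu\vdash m\}$ is linearly independent; so the only solutions with $\mu=\pi$ are the trivial ones and no further work is needed there.

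The hard part is the bounding step in the last branch (and the analogous bound hidden in the $a,b\ge2$ case above). The cheap invariants only deliver the multiplicative relations $mn=ab$, $n\mu_1=a\pi_1$, $n\,\ell(\mu)=a\,\ell(\pi)$, which cannot distinguish, say, $\mu=(2,1^2)$ from $\mu=(3,1)$ beyond their size, first part and length. Traction should come from the precise shape and multiplicity of the extremal constituents --- which already pin down most of $\mu$ as soon as $\nu$ is even mildly generic --- combined with the arithmetic of the hook-length identity read against $mn=ab$, which I expect to be rigid enough to force $m,b$ into a small range. The most delicate sub-case looks like the balanced one $m=b$ (hence $n=a$, $|\mu|=|\pi|$, $\mu_1=\pi_1$, $\ell(\mu)=\ell(\pi)$): there none of the easy invariants separates $\mu$ from $\pi$, and one has to squeeze genuinely finer information --- essentially the full extremal shapes and their multiplicities --- out of $F$ before the cancellation step applies.
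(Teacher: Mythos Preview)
Your proposal is a sketch, not a proof: the decisive step --- your ``bounding step'' --- is asserted with phrases like ``should come from'', ``I expect to be rigid enough'', and ``one has to squeeze genuinely finer information'', but is never carried out. The invariants you list are sound (degree, first part $n\mu_1$, length $n\ell(\mu)$, the extremal constituents, the hook product), and the paper uses essentially the first four of these; but the paper also shows by explicit example that the lexicographically and transpose-lexicographically extremal constituents can \emph{coincide} for two genuinely different products ($s_{(3^3,2,1)}\circ s_{(1^2)}$ and $s_{(2,1)}\circ s_{(4,1^4)}$ share both extremal terms), so your hope that ``the precise shape and multiplicity of the extremal constituents'' plus a hook identity will confine everything to a finite list needs a real argument, and there is no evidence the hook identity supplies the missing rigidity. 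In the paper the gap is closed case by case: when $\ell(\mu)=\ell(\pi)$ a short comparison of both extremal terms forces $\mu=\pi$; when $\ell(\mu)\neq\ell(\pi)$ and both are non-linear, a lengthy structural analysis (showing $\mu$ must be built from $q'\times q'$ blocks and then deriving a contradiction from the two descriptions of $\pi$) is required; and in the linear case $\mu=(m)$ one must go beyond the extremal terms and use the dominance-maximal constituents (plethystic semistandard tableaux) to rule out an infinite family of candidate coincidences. None of this machinery appears in your proposal.

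A smaller issue: your remark that $\mu=\pi$ forces $\nu=\rho$ via linear independence of $\{s_\nu\circ s_\mu:\nu\vdash n\}$ is fine, but the converse (``$\nu=\rho$ forces $m=b$'') is not obvious from linear independence alone and you do not justify it; in any case the paper does not need this reduction. Your treatment of the $m=1$ branch is essentially the paper's Theorem~B argument and is fine in outline, though again the step ``feeding in the explicit extremal shapes, together with the numerical constraints to bound $a,b$'' is where the actual work lives.
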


 In general, the decomposition of a plethysm product will have very, very many constituents. 
We  ask: {\em``when is the plethysm product of two Schur functions indecomposable?"}. 
We prove that in fact such a product is always decomposable, and even inhomogeneous, 
except for some obvious exceptions.  The analogous result  for the Kronecker product was obtained by   Bessenrodt and Kleshchev  \cite{BK}.

\begin{thmB*}
Let $\mu,\nu $ be  partitions.  
The product 
  $
 s_\nu\circ s_\mu 
 $ 
 is decomposable and inhomogeneous except in the following exceptional cases:
 $$
 s_{(1^2)}\circ s_{(1^2)}= s_{(2,1^2)},
\quad
 s_{(1^2)}\circ s_{(2)}= s_{(3,1)},
 \quad 
  s_\nu \circ s_{(1)}=s_\nu=
   s_{(1)}\circ    s_\nu  . $$

\end{thmB*}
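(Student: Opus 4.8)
The plan is to deduce the statement from Theorem A, together with the standard fact that the lexicographically greatest Schur constituent of a plethysm $s_\nu\circ s_\mu$ occurs with multiplicity one. I first dispose of the degenerate cases: we may assume $\mu$ and $\nu$ are nonempty, and if $\mu=(1)$ then $s_\nu\circ s_{(1)}=s_\nu$ while if $\nu=(1)$ then $s_{(1)}\circ s_\mu=s_\mu$, so in these cases the product is a single Schur function -- exactly the displayed family $s_\nu\circ s_{(1)}=s_\nu=s_{(1)}\circ s_\nu$. So assume $|\mu|\geq 2$ and $|\nu|\geq 2$, and suppose for contradiction that $s_\nu\circ s_\mu$ is homogeneous. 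Since it is a nonzero, Schur-positive symmetric function this means $s_\nu\circ s_\mu=c\,s_\lambda$ for a single partition $\lambda$ and an integer $c\geq 1$; the multiplicity-one statement -- obtained by a short leading-term computation, as $s_\mu=m_\mu+(\text{lexicographically smaller terms})$ forces the coefficient of the lex-greatest monomial of $s_\nu\circ s_\mu$ to equal the multiplicity of its lex-greatest Schur constituent -- then forces $c=1$, so that $s_\nu\circ s_\mu=s_\lambda=s_{(1)}\circ s_\lambda$.

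Now I would apply Theorem A to the identity $s_\nu\circ s_\mu=s_{(1)}\circ s_\lambda$. As $\nu\neq(1)$, the pairs $(\nu,\mu)$ and $((1),\lambda)$ are distinct, so we are in one of the five exceptional cases; that is, $(\nu,\mu)$ and $((1),\lambda)$ are linked through the five displayed identities, and their common plethysm $s_\lambda$ is a single Schur function. It then remains to read off from Theorem A the pairs $(\nu,\mu)$, with both entries different from $(1)$, that lie in such a chain whose common value is a single Schur function. The identities $s_{(1^2)}\circ s_{(1^2)}=s_{(2,1^2)}\circ s_{(1)}=s_{(2,1^2)}$ and $s_{(1^2)}\circ s_{(2)}=s_{(3,1)}\circ s_{(1)}=s_{(3,1)}$ (the middle equalities being part of the statement of Theorem A) show that $((1^2),(1^2))$ and $((1^2),(2))$ are two such pairs. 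The only remaining chains are those of the two exceptional identities $s_{(2,1^2)}\circ s_{(2)}=s_{(1^2)}\circ s_{(3,1)}$ and $s_{(2,1^2)}\circ s_{(1^2)}=s_{(1^2)}\circ s_{(2,1^2)}$, and one has to check that their common values are \emph{not} single Schur functions: this is a finite verification, since comparing the dimensions of $S^{(2,1,1)}\bigl(S^{(2)}V\bigr)$, respectively $S^{(2,1,1)}\bigl(\Lambda^2 V\bigr)$, with the dimensions $\dim S^{\lambda}(V)$ for small values of $\dim V$ rules out every possible $\lambda$. This gives $(\nu,\mu)\in\{((1^2),(1^2)),((1^2),(2))\}$, and together with the degenerate cases we obtain exactly the stated list of exceptions.

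The only non-formal ingredient is the multiplicity-one statement for the leading constituent of a plethysm; this is classical -- and in any case follows from the extremal-constituent analysis underpinning Theorem A -- but it is where a little genuine work sits. The rest is bookkeeping over the five exceptional identities, the one real computation being the dimension check eliminating $s_{(2,1^2)}\circ s_{(2)}$ and $s_{(2,1^2)}\circ s_{(1^2)}$ as single Schur functions. I expect the main point requiring care to be the \emph{completeness} of the list: confirming that those checks really exclude all candidates, and that no further chain of Theorem A's identities has been overlooked.
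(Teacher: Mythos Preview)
Your argument is logically coherent if Theorem~A is taken as given, but in this paper it is circular: the proof of Theorem~A (specifically the case $\mu=(m)$) explicitly invokes Theorem~B at the step where it argues that $s_\rho\circ s_{(1^2)}$ is decomposable unless $\rho=(1^2)$. More structurally, the paper establishes Theorem~B first precisely so that its corollary can dispose of all products involving the partition $(1)$, after which every subsequent argument toward Theorem~A assumes $m,n,p,q>1$. Your application of Theorem~A to the equality $s_\nu\circ s_\mu = s_{(1)}\circ s_\lambda$ lands exactly in the territory that the paper's proof of Theorem~A has already delegated back to Theorem~B.

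The paper's own proof of Theorem~B is direct and independent of Theorem~A: assuming $m,n\geq 2$ and that the product is homogeneous, it compares $\mathrm{max}_{\succ}(s_\nu\circ s_\mu)$ with $\mathrm{max}_{\succ_T}(s_\nu\circ s_\mu)$ using the explicit formulas for these extremal constituents. A short row-count argument shows these two partitions can coincide only when $\mu$ is linear; a further comparison then forces $m=2$ and $\nu=(1^n)$, and the known closed-form decomposition of $s_{(1^n)}\circ s_{(2)}$ finishes the job. That is the argument you should supply. (Incidentally, even granting Theorem~A, your dimension check for the products $s_{(2,1^2)}\circ s_{(2)}$ and $s_{(2,1^2)}\circ s_{(1^2)}$ is redundant: if either were a single $s_\lambda=s_{(1)}\circ s_\lambda$, that would be an additional coincidence not on Theorem~A's list, contradicting Theorem~A itself.)
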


 Understanding and decomposing the  Kronecker and   plethystic products of pairs of Schur functions was identified by Richard Stanley as two of the most important open problems in   algebraic combinatorics \cite[Problems 9 \& 10]{Sta00}. 
Almost nothing is known about general constituents of plethysm products;  however the maximal
 terms in the dominance ordering are now well-understood   \cite{PW}.
 Our proof of    Theorems A and B proceeds  by careful analysis  of these maximal terms.   
   \color{black}

Outside of combinatorics,  
 plethysm products arise naturally  in the representation theory of symmetric and general linear groups.  
In quantum information  theory, the positivity of  
constituents  in a  plethysm product of two Schur functions      is equivalent to the existence of  
quantum states with certain spectra, margins, and occupation numbers \cite{MR2421478,MR2745569}.  
Decomposing     Kronecker  and  plethystic  products  of Schur functions is  the  central plank of Geometric Complexity Theory, an approach that seeks to settle the P versus NP problem \cite{MR1861288};  this approach was recently shown to require not only knowledge of the positivity but also precise information on the actual multiplicities of the  constituents  of the products $s_\nu \circ s_\mu$ \cite{MR3868002}.

 \section{Partitions, symmetric functions\\  and maximal terms in plethysm} 
 
  We define a {\sf composition} $\lambda\vDash n$ to be a   finite  sequence  of non-negative integers $ (\lambda_1,\lambda_2, \ldots)$ whose sum, $|\lambda| = \lambda_1+\lambda_2 + \dots$, equals $n$.
  If the sequence $(\lambda_1,\lambda_2, \ldots)$ is  weakly decreasing,  we say that $\la$ is a {\sf partition} and write $\la\vdash n$.       Given  $\lambda  $ a partition of $n$, the {\sf Young diagram}   is defined to be the configuration of nodes
\[
[\la]=\{(r,c) \mid  1\leq  c\leq \lambda_r\}.
\]
We say that a partition is {\sf linear} if it consists only of one row, or one column.  
 The conjugate partition, $\la^T$, is the partition obtained by interchanging the rows and columns of $\la$.  The number of non-zero parts of a partition, $\la$,  is called its {\sf length}, $\ell(\lambda)$; the size of the largest part is called the {\sf width}, $w(\la)$; the sum of all the parts of $\lambda$ is called its  size.

 Given two partitions $\la $ and $\mu$, we let  $\la + \mu$  and  $\la \sqcup \mu$ denote the partitions obtained by adding the partition horizontally and vertically respectively.    In more detail
 $$
 \la+\mu=(\la_1+\mu_1,\la_2+\mu_2,\la_3+\mu_3,\dots )
 $$
 and $\la\sqcup\mu$ is the partition whose multiset of parts is the disjoint union of the multisets of parts of $\la$ and $\mu$. We have that  
  $$\la\sqcup\mu= (\la^T+\mu^T)^T.$$
Finally we remark that, in this paper, the partition $\la\sqcup\mu$ is usually equal to $$(\la_1,\la_2,\dots,\la_{\ell(\la)},\mu_1,\mu_2,\dots,\mu_{\ell(\mu)}). $$ In other words, we often do not need to reorder the multisets of parts --- this is simply because $\la_{\ell(\la)}\ge \mu_1$ in most cases.  

 We   now recall the {\sf dominance ordering} on partitions. Let $\la,\mu$ be partitions. 
 We  write  
 $\la\trianglerighteq \mu$  if 
 $$
 \sum_{1\leq i \leq k}\la_i \geq  \sum_{1\leq i \leq k}\mu_i \text{  for all } k\geq 1.
 $$
 If $\la\trianglerighteq \mu$ and $\la\neq \mu$ we write $\la\rhd \mu$.  The dominance ordering  is a partial ordering on the set of  partitions of a given size.  This partial order can be refined into a total ordering as follows: 
  we write 
$\la\succ \mu$  if 
 $$
\text{  $\la_k >\mu_k$ for some $k\geq 1$ and  }
 \la_i = \mu_i \text{ for all } 1\leq i \leq k-1.
 $$
  We refer to   $\succ $  as the {\sf lexicographic ordering}. 
    We now define the  {\sf transpose-lexicographic} ordering  as follows:
    $$
       \la\succ_T\mu \text{ if and only if }
      \la^T\succ \mu^T .    
    $$
  We emphasise that this total ordering is not simply the opposite ordering to the lexicographic ordering;  minimality with respect to $\succ $ is not equivalent to maximality with respect to $\succ_T$.  
  
   Let   $\lambda   $ be  a partition  of $n$. 
 A {\sf    Young tableau of shape $  \lambda$} may be defined as  a map
 $\SSTt : [\la] \to \NN.$  Recall that the tableau $\SSTt$ is {\sf semistandard} if  $\SSTt(r,c-1)\leq \SSTt(r,c)$   and
$\SSTt(r-1,c)< \SSTt(r,c)$  for all $(r,c)\in [\la]$.  
 We let $\SSTt_k = |\{ (r,c)\in [\la] \mid \SSTt(r,c)=k\}|$ for $k\in \NN$.  
 We refer to the composition $\alpha=(\SSTt_1,\SSTt_2,\SSTt_3,\dots)$ as the {\sf weight} of the tableau~$\SSTt$.  
 We denote the set of all  tableaux of shape $\la$ by $\SStd_\NN(\la)$,  and the subset of those having weight $\alpha$ by $\SStd_\NN(\la,\alpha)$.    
The {\sf Schur function} $s_\la$, for $\la$ a partition of $n$, may be defined as follows:
   $$
   s_\la = \sum_{
   \begin{subarray}c
   \alpha \vDash n
   \end{subarray}
   }  
| \SStd_\NN(\la,\alpha)
| x^\alpha
 \qquad  
\text{   where} \qquad   x^\alpha= x_1^{\alpha_1} x_2^{\alpha_2} x_3^{\alpha_3}\dots . $$

The  {\sf plethysm product} of two symmetric functions 
 is defined in \cite[Chapter~7,   A2.6]{Sta99} or   \cite[Chapter I.8]{MR3443860}. 
 The plethysm product of two Schur functions  is again a symmetric function  and so can be rewritten as a linear combination of Schur functions:
$$ 
s_{\nu}\circ s_{\mu} 
 =\sum_\alpha p(\nu,\mu,\alpha) s_{\alpha}
 $$
  such that $p(\nu,\mu,\alpha) \geq 0$.   
We say that the product is {\sf homogeneous} if there is precisely one partition, $\alpha$, such that  $p(\nu,\mu,\alpha) > 0$; we say that the product is 
 {\sf indecomposable} if, in addition,   $p(\nu,\mu,\alpha) =1$.  
We now recall the role  conjugation -- often called the $\omega$ involution -- plays   in plethysm (see, for example, \cite[Ex. 1, Chapter I.8]{MR3443860}). For $\mu\vdash m$, $\nu \vdash n$, and $\alpha\vdash mn$  we have that 
\begin{equation}\label{conjugate}
p( \nu,\mu,\alpha)=
\begin{cases}
p( \nu,\mu^T,\alpha^T)				&\text{if $m$ is even}\\
p( \nu^T,\mu^T,\alpha^T)				&\text{if $m$ is odd.}  
\end{cases}
\end{equation}
Throughout this paper we shall let $\mu,\nu,  \pi,\rho$ be partitions of $m, n, p$ and $q$ respectively.  In order to keep track of the effect of this conjugation 
when comparing products 
$s_\nu \circ s_\mu $ and $s_\rho \circ s_\pi $,  
 we set 
$$ \nu^M=\begin{cases}
\nu 		&\text{if $m$ is even}\\
\nu^T &\text{if $m$ is odd} 
\end{cases}
\qquad
 \rho^P=\begin{cases}
\rho 		&\text{if $p$ is even}\\
\rho^T &\text{if $p$ is odd} 
\end{cases}
$$
 Given   a total ordering, $>$, on partitions we let   $${ \rm max}_{> } (s_{\nu}\circ s_{\mu})$$
 denote the  unique  partition, $\lambda$,  such that $p(\nu,\mu,\lambda)\neq 0$ and  $p(\nu,\mu,\alpha)=0$ for all $\alpha >  \lambda$.  We shall use this with both the lexicographic $\succ$ and transpose-lexicographic $\succ_T$ orderings .   By \cref{conjugate} we have that
$$  { \rm max}_{\succ_T } (s_{\nu } \circ s_{\mu })  
 =  ({ \rm max}_{\succ } (s_{\nu^M} \circ s_{\mu^T}))^T   .$$
 
 \begin{figure}[ht!]
$$\scalefont{0.6} 
 \begin{tikzpicture} [scale=0.5]
\clip(-0.1,5) rectangle (18.1,-11);
\path(0,0) coordinate (origin); 
\draw(origin)--++(0:18)--++(-90:1)--++(180:18)--++(90:1);
  \node  at (9,-0.5) {$n\mu_1 $};

\path(0,-1) coordinate (origin);

\draw(origin)--++(0:14)--++(-90:1)--++(180:14)--++(90:1);
  \node  at (7,-1.5) {$n\mu_2 $};
\draw[densely dotted] (14,-2)--++(-90:1)--++(180:4)--++(-90:1);
\draw[densely dotted] (0,-2)--++(-90:2);

\path(0,-4) coordinate (origin);

\draw(origin)--++(0:10)--++(-90:1)--++(180:10)--++(90:1);
  \node  at (5,-4.5) {$n\mu_{\ell-1} $};
  \path(0,-5) coordinate (origin); 
  \draw[fill=cyan!30](6,-5)--++(0:2)--++(-90:1)--++(180:2);
\draw(7,-5.5) node {$\nu_1$};  

 \draw[fill=cyan!30](0,-6)--++(0:2)--++(-90:1)--++(180:1)--++(-90:1)--++(180:1)--++(90:2);

\draw(1,-6.5) node {$\nu_{>1}$}; 
 
\draw(origin)--++(0:6)--++(-90:1)--++(180:6)--++(90:1);
  \node  at (3,-5.5) {$n\mu_{\ell}-n $};

\end{tikzpicture}
\qquad  \begin{tikzpicture} [scale=0.45]

\path(1,0) coordinate (origin); 

\draw(origin)--++(-90:18)--++(0:1)--++(90:18)--++(180:1);
   \node[rotate=-90] at (1.5,-9) {$n\mu_1^T$};

\path(2,0) coordinate (origin); 

\draw(origin)--++(-90:14)--++(0:1)--++(90:14)--++(180:1);
  \node[rotate=-90] at (2.5,-7) {$n\mu_2^T$};

\path(5,0) coordinate (origin); 

\draw(origin)--++(-90:10)--++(0:1)--++(90:10)--++(180:1);
  \node[rotate=-90] at (5.5,-5) {$n\mu_{k- {\scalefont{0.9}1}}^T$};

\path(6,0) coordinate (origin); 

\draw(origin)--++(-90:6)--++(0:1)--++(90:6)--++(180:1);
  \node[rotate=-90] at (6.5,-3) {$n\mu_{k }^T-n$};

 \draw[fill=cyan!30](6,-6)--++(-90:2)--++(0:1)--++(90:2)--++(180:1);
  \node[rotate=-90] at (6.5,-7) {$\nu_1^M$};
 
  \draw[fill=cyan!30](7,0)--++(-90:2)--++(0:1) --++(90:1) --++(0:1) --++(90:1) --++(180:3);
 
  \node[rotate=-90] at (7.5,-1) {$\nu_{>1}^M$};
 
 \draw[densely dotted](1,0)--(6,0);

     \draw[densely dotted](3,-14)--++(0:1)--++(90:4)--++(0:1) coordinate  (origin);

\end{tikzpicture}$$

\!\!\!\!
\caption{Examples of the partitions 
${ \rm max}_{\succ } (s_\nu \circ s_\mu)   
$ and  
$
{ \rm max}_{\succ_T } (s_\nu \circ s_\mu) $
 for $\mu\vdash m$ and $\nu\vdash n$ with $\ell(\mu)=\ell$ and $w(\mu)=k$.  
 }
\label{maxlex}
\end{figure}

 The following theorems will be incredibly important in our arguments.

\begin{thm}[{\cite[Corollary 9.1]{PW}} and \cite{arxiv}]
\label{pppppppppp}
\color{black}Let $\mu $,  $\nu $ be partitions of $m$ and $n$ respectively.  
The unique maximal terms of 
$s_\nu \circ s_\mu$
 in the lexicographic and transpose lexicographic ordering are as follows :
  $$
{ \rm max}_{\succ } (s_\nu \circ s_\mu)  =
(n\mu_1,n\mu_2,\dots, n\mu_{\ell(\mu) -1},n\mu_{\ell(\mu)}-n+\nu_1, \nu_2, \dots ,\nu_{\ell(\nu)}) ,
 $$
  $$
{ \rm max}_{\succ_T } (s_\nu \circ s_\mu)  =
 (n\mu_1^T,n\mu_2^T,\dots, n\mu^T_{\mu_1 -1},n\mu^T_{\mu_1}-n+\nu_1^M, \nu_2^M, \dots ,\nu_{\ell(\nu^M)}^M) )^T.
 $$
 Moreover, we have that  $$p(\nu,\mu,{ \rm max}_{\succ } (s_\nu \circ s_\mu) )=1=
 p(\nu,\mu, { \rm max}_{\succ_T } (s_{\nu } \circ s_{\mu })   ).$$  
\end{thm}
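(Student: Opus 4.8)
\emph{Reduction to a leading monomial.} The plan is to compute the lexicographically largest monomial of $s_\nu\circ s_\mu$ by hand, and to read off from it both the $\succ$-maximal partition and its multiplicity. The first, purely formal, observation is that for a nonzero symmetric function $f=\sum_\alpha c_\alpha s_\alpha$ (a finite sum over partitions of a fixed integer), the $\succ$-largest $\alpha$ with $c_\alpha\neq 0$ is the exponent of the lexicographically largest monomial of $f$, and the coefficient of that monomial equals $c_\alpha$. Indeed $s_\alpha$ has leading monomial $x^\alpha$ with coefficient $|\SStd_\NN(\alpha,\alpha)|=1$, and all its other monomials are $\prec\alpha$ because dominance refines the lexicographic order. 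Hence it suffices to find the leading monomial of $s_\nu\circ s_\mu$, regarded as a formal power series, and its coefficient.

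\emph{The superstandard term.} Next I would use the standard monomial description of plethysm. List the monomials of $s_\mu$, with multiplicity, in weakly decreasing lexicographic order as $x^{\beta(1)},x^{\beta(2)},\dots$, so $\beta(1)\succeq\beta(2)\succeq\cdots$; then $s_\nu\circ s_\mu=\sum_{\SSTt\in\SStd_\NN(\nu)}x^{\sum_{(r,c)\in[\nu]}\beta(\SSTt(r,c))}$. Every $\SSTt\in\SStd_\NN(\nu)$ satisfies $\SSTt(r,c)\geq r$, and since $\succ$ is a monomial order (hence compatible with addition of exponents) each summand has $\sum_{(r,c)}\beta(\SSTt(r,c))\preceq\sum_{(r,c)}\beta(r)=\sum_{r=1}^{\ell(\nu)}\nu_r\,\beta(r)$, the right-hand exponent being realised by the superstandard tableau $\SSTt^\nu$ with $\SSTt^\nu(r,c)=r$. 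Moreover, in a monomial order an equality $\sum u_i=\sum v_i$ with $u_i\preceq v_i$ for all $i$ forces $u_i=v_i$ for all $i$; so $\SSTt$ attains the leading exponent if and only if $\beta(\SSTt(r,c))=\beta(r)$ in every cell of $[\nu]$.

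\emph{Controlling the top monomials of $s_\mu$.} The crux is the elementary claim that, writing $\ell=\ell(\mu)$, the monomials $x^\gamma$ of $s_\mu$ whose first $\ell-1$ coordinates agree with those of $\mu$ are infinite in number, each occurs in $s_\mu$ with coefficient exactly $1$, and each is $\succ$ every other monomial of $s_\mu$. The last point holds because a monomial $x^\gamma$ of $s_\mu$ has $\gamma\trianglelefteq\mu$, so at the first coordinate where $\gamma$ differs from $\mu$ it strictly decreases; the coefficient-$1$ point is a forcing argument for semistandard tableaux of shape $\mu$ of such a weight — column-strictness forces rows $1,\dots,\ell-1$ to be the constant rows $r\mapsto r$, after which the last row is the unique weakly increasing word of the prescribed content; infinitude is clear since $\mu_\ell\geq 1$ allows arbitrarily many trailing zeros before the final positive entry. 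Granting this, $\beta(1)\succ\beta(2)\succ\cdots$ is \emph{strictly} decreasing, so the equality condition of the previous paragraph is met only by $\SSTt=\SSTt^\nu$; hence the leading monomial of $s_\nu\circ s_\mu$ has coefficient $1$, giving $p(\nu,\mu,{\rm max}_\succ(s_\nu\circ s_\mu))=1$. I expect this to be the main obstacle: controlling the multiplicities of the lex-largest monomials of $s_\mu$ is precisely what makes the ``$p=1$'' statement delicate, since a single multiplicity $\geq 2$ among the top $\ell(\nu)$ monomials would destroy it, and it is the abundance of multiplicity-one dominant monomials that prevents such a collision.

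\emph{Explicit form and the transpose case.} Finally I would compute $\sum_{r=1}^{\ell(\nu)}\nu_r\beta(r)$. Ordering the dominant monomials of $s_\mu$ from the previous step gives $\beta(1)=\mu$ and, for $r\geq 2$, $\beta(r)=(\mu_1,\dots,\mu_{\ell-1},\mu_\ell-1,0,\dots,0,1)$ with the final $1$ in position $\ell+r-1$; substituting and collecting coordinates yields exactly $(n\mu_1,\dots,n\mu_{\ell-1},n\mu_\ell-n+\nu_1,\nu_2,\dots,\nu_{\ell(\nu)})$, which is the first formula of the theorem. For the transpose-lexicographic statement I would invoke the identity ${\rm max}_{\succ_T}(s_\nu\circ s_\mu)=({\rm max}_\succ(s_{\nu^M}\circ s_{\mu^T}))^T$ recorded before the theorem (a consequence of \cref{conjugate}): apply the formula just proved with $\mu^T$, of length $w(\mu)=\mu_1$, and $\nu^M$ in place of $\mu$ and $\nu$, then transpose; the multiplicity-one claim passes through \cref{conjugate} in the same way.
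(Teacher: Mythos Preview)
Your argument is correct. The paper does not prove this theorem but quotes it from \cite{PW} and \cite{arxiv}; you have supplied a direct, self-contained proof via the lex-leading monomial of $s_\nu\circ s_\mu$. The reduction of ${\rm max}_\succ$ and its multiplicity to the leading monomial and its coefficient is valid because the $p(\nu,\mu,\alpha)$ are nonnegative and each $s_\alpha$ has lex-leading monomial $x^\alpha$ with coefficient~$1$. Your identification of the top monomials of $s_\mu$---namely $\beta(1)=\mu$ and $\beta(r)=(\mu_1,\dots,\mu_{\ell-1},\mu_\ell-1,0^{r-2},1)$ for $r\geq 2$, each occurring with coefficient~$1$---is exactly right: the forcing argument on rows $1,\dots,\ell-1$ of a semistandard $\mu$-tableau is the standard one, and in the lex enumeration of compositions of $\mu_\ell$ supported on coordinates $\geq\ell$ there are infinitely many with first part $\mu_\ell-1$ before any with first part $\leq\mu_\ell-2$, so the list never reaches a repeated monomial. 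Compatibility of $\succ$ with addition of exponent vectors then pins down the superstandard tableau $\SSTt^\nu$ as the unique maximiser, yielding both the displayed formula and $p=1$; the transpose-lexicographic case follows from \cref{conjugate} exactly as you say.

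For comparison with the paper's framework: since the lex-maximal constituent is in particular dominance-maximal, one can alternatively read off the result from \cref{PW} (also quoted rather than proved here). In that language the unique plethystic semistandard tableau of shape $\mu^\nu$ of the claimed maximal weight places the $\mu$-tableau of weight $\beta(r)$ in every cell of row $r$ of $[\nu]$. Your monomial argument is essentially the same computation stripped of the plethystic-tableau packaging, and has the advantage of being entirely elementary.
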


 \begin{eg} \label{PWex}
When $\mu=(m)$, Theorem~\ref{PW} shows that
  $$ { \rm max}_{\succ } (s_\nu \circ s_{(m)}) = (nm-n) + \nu, \quad 
{ \rm max}_{\succ_T } (s_\nu \circ s_{(m)})  =
 ((n^{m-1}) \sqcup  \nu^M)^T.
 $$
\end{eg}

Sometimes we shall use the dominance ordering  $\rhd $ to compare the summands of $s_{\nu}\circ s_{\mu}$, and then there will, in general, be many  (incomparable) maximal partitions. To understand these summands, we require some further definitions.    We place a lexicographic  ordering, $\prec$, on the set of semistandard Young tableaux as follows.
  Let $\SSTS\neq\SSTT$ be semistandard $\mu$-tableaux,   and consider the leftmost column in which $\SSTS$ and $\SSTT$ differ.  We write 
 $\SSTS \prec \SSTT$ if  the greatest entry not appearing in both columns lies in $\SSTT$.  Following \cite[Definition 1.4]{deBPW}, we define a {\sf    plethystic  tableau of shape $  \mu^\nu$}  and weight $\alpha$ to be  a map
 $$\SSTT : [\nu] \to \SStd_\NN(\mu)$$ such that the total  number of occurrences of
$k$ in the tableau entries of $\SSTT$ is $\alpha_k$ for each $k$.
 We say that such a tableau is {\sf semistandard} if  $\SSTT(r,c-1) \preceq  \SSTT(r,c)$   and
$\SSTT(r-1,c)  \prec  \SSTT(r,c)$  for all $(r,c)\in [\nu]$.  
   We denote the set of all plethystic  tableaux of shape $\mu^\nu$ and weight $\alpha$ by by $\PStd(\mu^\nu,
  \alpha)$.

 \begin{figure}[ht!]
$$  \begin{tikzpicture} [scale=0.6]

\path(0,0) coordinate (origin); 
 \begin{scope}{\draw[very thick](origin)--++(0:2)--++(-90:1)--++(180:1)--++(-90:1)--++(180:1)--++(90:2); 

   \draw(0.5,-0.5)  node {1};
       \draw(1.5,-0.5)node {1};
              \draw(0.5,-1.5)node {2};
       
  \clip (origin)--++(0:2)--++(-90:1)--++(180:1)--++(-90:1)--++(180:1)--++(90:2);

  \path(origin) coordinate (origin);
   \foreach \i in {1,...,19}
  {
    \path (origin)++(0:1*\i cm)  coordinate (a\i);
    \path (origin)++(-90:1*\i cm)  coordinate (b\i);
    \path (a\i)++(-90:10cm) coordinate (ca\i);
    \path (b\i)++(0:10cm) coordinate (cb\i);
    \draw[thin] (a\i) -- (ca\i)  (b\i) -- (cb\i); }

    } \end{scope}

  \path(3,0) coordinate (origin); 
 \begin{scope}{\draw[very thick](origin)--++(0:2)--++(-90:1)--++(180:1)--++(-90:1)--++(180:1)--++(90:2); 
      \path(origin)--++(0:0.5)--++(-90:0.5)  node {1};
       \path(origin)--++(0:1.5)--++(-90:0.5)node {1};
            \path(origin)--++(0:0.5)--++(90:-1.5)node {3};

  \clip (origin)--++(0:1)--++(-90:1)--++(180:1)--++(-90:1)--++(180:1)--++(90:1); 
  \path(origin) coordinate (origin);
   \foreach \i in {1,...,19}
  {
    \path (origin)++(0:1*\i cm)  coordinate (a\i);
    \path (origin)++(-90:1*\i cm)  coordinate (b\i);
    \path (a\i)++(-90:10cm) coordinate (ca\i);
    \path (b\i)++(0:10cm) coordinate (cb\i);
    \draw[thin] (a\i) -- (ca\i)  (b\i) -- (cb\i); }

 \path(0.5,-0.5) coordinate (origin);
 \foreach \i in {1,...,19}
  {
    \path (origin)++(0:1*\i cm)  coordinate (a\i);
    \path (origin)++(-90:1*\i cm)  coordinate (b\i);
    \path (a\i)++(-90:1cm) coordinate (ca\i);
        \path (ca\i)++(-90:1cm) coordinate (cca\i);
    \path (b\i)++(0:1cm) coordinate (cb\i);
    \path (cb\i)++(0:1cm) coordinate (ccb\i);
  }} \end{scope}

    \path(6,0) coordinate (origin); 
 \begin{scope}{\draw[very thick](origin)--++(0:2)--++(-90:1)--++(180:1)--++(-90:1)--++(180:1)--++(90:2); 
         \path(origin)--++(0:0.5)--++(-90:0.5)  node {1};
       \path(origin)--++(0:1.5)--++(-90:0.5)node {1};
            \path(origin)--++(0:0.5)--++(90:-1.5)node {3}; 

  \clip (origin)--++(0:1)--++(-90:1)--++(180:1)--++(-90:1)--++(180:1)--++(90:1); 
  \path(origin) coordinate (origin);
   \foreach \i in {1,...,19}
  {
    \path (origin)++(0:1*\i cm)  coordinate (a\i);
    \path (origin)++(-90:1*\i cm)  coordinate (b\i);
    \path (a\i)++(-90:10cm) coordinate (ca\i);
    \path (b\i)++(0:10cm) coordinate (cb\i);
    \draw[thin] (a\i) -- (ca\i)  (b\i) -- (cb\i); }

 \path(0.5,-0.5) coordinate (origin);
 \foreach \i in {1,...,19}
  {
    \path (origin)++(0:1*\i cm)  coordinate (a\i);
    \path (origin)++(-90:1*\i cm)  coordinate (b\i);
    \path (a\i)++(-90:1cm) coordinate (ca\i);
        \path (ca\i)++(-90:1cm) coordinate (cca\i);
    \path (b\i)++(0:1cm) coordinate (cb\i);
    \path (cb\i)++(0:1cm) coordinate (ccb\i);
  }} \end{scope}

  \path(0,-3) coordinate (origin); 
     \path(origin)--++(0:0.5)--++(-90:0.5)  node {1};
       \path(origin)--++(0:1.5)--++(-90:0.5)node {2};
            \path(origin)--++(0:0.5)--++(90:-1.5)node {3}; 
 \begin{scope}{\draw[very thick](origin)--++(0:2)--++(-90:1)--++(180:1)--++(-90:1)--++(180:1)--++(90:2); 
    
  \clip (origin)--++(0:1)--++(-90:1)--++(180:1)--++(-90:1)--++(180:1)--++(90:1); 
  \path(origin) coordinate (origin);
   \foreach \i in {1,...,19}
  {
    \path (origin)++(0:1*\i cm)  coordinate (a\i);
    \path (origin)++(-90:1*\i cm)  coordinate (b\i);
    \path (a\i)++(-90:10cm) coordinate (ca\i);
    \path (b\i)++(0:10cm) coordinate (cb\i);
    \draw[thin] (a\i) -- (ca\i)  (b\i) -- (cb\i); }

 \path(0.5,-0.5) coordinate (origin);
 \foreach \i in {1,...,19}
  {
    \path (origin)++(0:1*\i cm)  coordinate (a\i);
    \path (origin)++(-90:1*\i cm)  coordinate (b\i);
    \path (a\i)++(-90:1cm) coordinate (ca\i);
        \path (ca\i)++(-90:1cm) coordinate (cca\i);
    \path (b\i)++(0:1cm) coordinate (cb\i);
    \path (cb\i)++(0:1cm) coordinate (ccb\i);
  }} \end{scope}

  \path(3,-3) coordinate (origin); 
 \begin{scope}{\draw[very thick](origin)--++(0:2)--++(-90:1)--++(180:1)--++(-90:1)--++(180:1)--++(90:2); 
      \path(origin)--++(0:0.5)--++(-90:0.5)  node {1};
       \path(origin)--++(0:1.5)--++(-90:0.5)node {1};
            \path(origin)--++(0:0.5)--++(90:-1.5)node {4}; 
   
  \clip (origin)--++(0:1)--++(-90:1)--++(180:1)--++(-90:1)--++(180:1)--++(90:1); 
  \path(origin) coordinate (origin);
   \foreach \i in {1,...,19}
  {
    \path (origin)++(0:1*\i cm)  coordinate (a\i);
    \path (origin)++(-90:1*\i cm)  coordinate (b\i);
    \path (a\i)++(-90:10cm) coordinate (ca\i);
    \path (b\i)++(0:10cm) coordinate (cb\i);
    \draw[thin] (a\i) -- (ca\i)  (b\i) -- (cb\i); }

 \path(0.5,-0.5) coordinate (origin);
 \foreach \i in {1,...,19}
  {
    \path (origin)++(0:1*\i cm)  coordinate (a\i);
    \path (origin)++(-90:1*\i cm)  coordinate (b\i);
    \path (a\i)++(-90:1cm) coordinate (ca\i);
        \path (ca\i)++(-90:1cm) coordinate (cca\i);
    \path (b\i)++(0:1cm) coordinate (cb\i);
    \path (cb\i)++(0:1cm) coordinate (ccb\i);
  }} \end{scope}

 \draw[very thick] (-0.5,0.5)--(8.5,0.5)--(8.5,-2.5)--(5.5,-2.5)--(5.5,-5.5)--(-0.5,-5.5)--(-0.5,0.5);
 \draw[very thick] (5.5,-2.5)--(-0.5,-2.5);
  \draw[very thick] (5.5,-2.5)--(5.5,0.5);
    \draw[very thick] (2.5,-5.5)--(2.5,0.5);
   \end{tikzpicture} 
   \qquad     
    \begin{tikzpicture} [scale=0.6]

\path(0,0) coordinate (origin); 
 \begin{scope}{\draw[very thick](origin)--++(0:2)--++(-90:1)--++(180:1)--++(-90:1)--++(180:1)--++(90:2); 

   \draw(0.5,-0.5)  node {1};
       \draw(1.5,-0.5)node {1};
              \draw(0.5,-1.5)node {2};
       
  \clip (origin)--++(0:2)--++(-90:1)--++(180:1)--++(-90:1)--++(180:1)--++(90:2);

  \path(origin) coordinate (origin);
   \foreach \i in {1,...,19}
  {
    \path (origin)++(0:1*\i cm)  coordinate (a\i);
    \path (origin)++(-90:1*\i cm)  coordinate (b\i);
    \path (a\i)++(-90:10cm) coordinate (ca\i);
    \path (b\i)++(0:10cm) coordinate (cb\i);
    \draw[thin] (a\i) -- (ca\i)  (b\i) -- (cb\i); }

    } \end{scope}

  \path(3,0) coordinate (origin); 
 \begin{scope}{\draw[very thick](origin)--++(0:2)--++(-90:1)--++(180:1)--++(-90:1)--++(180:1)--++(90:2); 
      \path(origin)--++(0:0.5)--++(-90:0.5)  node {1};
       \path(origin)--++(0:1.5)--++(-90:0.5)node {1};
            \path(origin)--++(0:0.5)--++(90:-1.5)node {2};

  \clip (origin)--++(0:1)--++(-90:1)--++(180:1)--++(-90:1)--++(180:1)--++(90:1); 
  \path(origin) coordinate (origin);
   \foreach \i in {1,...,19}
  {
    \path (origin)++(0:1*\i cm)  coordinate (a\i);
    \path (origin)++(-90:1*\i cm)  coordinate (b\i);
    \path (a\i)++(-90:10cm) coordinate (ca\i);
    \path (b\i)++(0:10cm) coordinate (cb\i);
    \draw[thin] (a\i) -- (ca\i)  (b\i) -- (cb\i); }

 \path(0.5,-0.5) coordinate (origin);
 \foreach \i in {1,...,19}
  {
    \path (origin)++(0:1*\i cm)  coordinate (a\i);
    \path (origin)++(-90:1*\i cm)  coordinate (b\i);
    \path (a\i)++(-90:1cm) coordinate (ca\i);
        \path (ca\i)++(-90:1cm) coordinate (cca\i);
    \path (b\i)++(0:1cm) coordinate (cb\i);
    \path (cb\i)++(0:1cm) coordinate (ccb\i);
  }} \end{scope}

    \path(6,0) coordinate (origin); 
 \begin{scope}{\draw[very thick](origin)--++(0:2)--++(-90:1)--++(180:1)--++(-90:1)--++(180:1)--++(90:2); 
         \path(origin)--++(0:0.5)--++(-90:0.5)  node {1};
       \path(origin)--++(0:1.5)--++(-90:0.5)node {1};
            \path(origin)--++(0:0.5)--++(90:-1.5)node {2}; 

  \clip (origin)--++(0:1)--++(-90:1)--++(180:1)--++(-90:1)--++(180:1)--++(90:1); 
  \path(origin) coordinate (origin);
   \foreach \i in {1,...,19}
  {
    \path (origin)++(0:1*\i cm)  coordinate (a\i);
    \path (origin)++(-90:1*\i cm)  coordinate (b\i);
    \path (a\i)++(-90:10cm) coordinate (ca\i);
    \path (b\i)++(0:10cm) coordinate (cb\i);
    \draw[thin] (a\i) -- (ca\i)  (b\i) -- (cb\i); }

 \path(0.5,-0.5) coordinate (origin);
 \foreach \i in {1,...,19}
  {
    \path (origin)++(0:1*\i cm)  coordinate (a\i);
    \path (origin)++(-90:1*\i cm)  coordinate (b\i);
    \path (a\i)++(-90:1cm) coordinate (ca\i);
        \path (ca\i)++(-90:1cm) coordinate (cca\i);
    \path (b\i)++(0:1cm) coordinate (cb\i);
    \path (cb\i)++(0:1cm) coordinate (ccb\i);
  }} \end{scope}

  \path(0,-3) coordinate (origin); 
     \path(origin)--++(0:0.5)--++(-90:0.5)  node {1};
       \path(origin)--++(0:1.5)--++(-90:0.5)node {2};
            \path(origin)--++(0:0.5)--++(90:-1.5)node {2}; 
 \begin{scope}{\draw[very thick](origin)--++(0:2)--++(-90:1)--++(180:1)--++(-90:1)--++(180:1)--++(90:2); 
    
  \clip (origin)--++(0:1)--++(-90:1)--++(180:1)--++(-90:1)--++(180:1)--++(90:1); 
  \path(origin) coordinate (origin);
   \foreach \i in {1,...,19}
  {
    \path (origin)++(0:1*\i cm)  coordinate (a\i);
    \path (origin)++(-90:1*\i cm)  coordinate (b\i);
    \path (a\i)++(-90:10cm) coordinate (ca\i);
    \path (b\i)++(0:10cm) coordinate (cb\i);
    \draw[thin] (a\i) -- (ca\i)  (b\i) -- (cb\i); }

 \path(0.5,-0.5) coordinate (origin);
 \foreach \i in {1,...,19}
  {
    \path (origin)++(0:1*\i cm)  coordinate (a\i);
    \path (origin)++(-90:1*\i cm)  coordinate (b\i);
    \path (a\i)++(-90:1cm) coordinate (ca\i);
        \path (ca\i)++(-90:1cm) coordinate (cca\i);
    \path (b\i)++(0:1cm) coordinate (cb\i);
    \path (cb\i)++(0:1cm) coordinate (ccb\i);
  }} \end{scope}

  \path(3,-3) coordinate (origin); 
 \begin{scope}{\draw[very thick](origin)--++(0:2)--++(-90:1)--++(180:1)--++(-90:1)--++(180:1)--++(90:2); 
      \path(origin)--++(0:0.5)--++(-90:0.5)  node {1};
       \path(origin)--++(0:1.5)--++(-90:0.5)node {1};
            \path(origin)--++(0:0.5)--++(90:-1.5)node {3}; 
   
  \clip (origin)--++(0:1)--++(-90:1)--++(180:1)--++(-90:1)--++(180:1)--++(90:1); 
  \path(origin) coordinate (origin);
   \foreach \i in {1,...,19}
  {
    \path (origin)++(0:1*\i cm)  coordinate (a\i);
    \path (origin)++(-90:1*\i cm)  coordinate (b\i);
    \path (a\i)++(-90:10cm) coordinate (ca\i);
    \path (b\i)++(0:10cm) coordinate (cb\i);
    \draw[thin] (a\i) -- (ca\i)  (b\i) -- (cb\i); }

 \path(0.5,-0.5) coordinate (origin);
 \foreach \i in {1,...,19}
  {
    \path (origin)++(0:1*\i cm)  coordinate (a\i);
    \path (origin)++(-90:1*\i cm)  coordinate (b\i);
    \path (a\i)++(-90:1cm) coordinate (ca\i);
        \path (ca\i)++(-90:1cm) coordinate (cca\i);
    \path (b\i)++(0:1cm) coordinate (cb\i);
    \path (cb\i)++(0:1cm) coordinate (ccb\i);
  }} \end{scope}

 \draw[very thick] (-0.5,0.5)--(8.5,0.5)--(8.5,-2.5)--(5.5,-2.5)--(5.5,-5.5)--(-0.5,-5.5)--(-0.5,0.5);
 \draw[very thick] (5.5,-2.5)--(-0.5,-2.5);
  \draw[very thick] (5.5,-2.5)--(5.5,0.5);
    \draw[very thick] (2.5,-5.5)--(2.5,0.5);
   \end{tikzpicture}   $$
   \caption{Two plethystic tableaux of shape $ {(2,1)}^{(3,2)}$.  
   The former has weight $(9,2,3,1)$ and the latter has weight $(9,5,1)$. 
   The latter is maximal in the dominance ordering; the former is not.   }
   \label{pleth-tab}
\end{figure}

\begin{thm}[{\cite[Theorem 1.5]{deBPW}}]\label{PW}The maximal partitions $\alpha$ in the dominance order such that $s_\alpha$ is a constituent of $s_\nu \circ s_\mu$ are precisely the maximal weights of the plethystic semistandard tableaux of shape $\mu^\nu$. Moreover if $\alpha$ is such a maximal partition then $p( \nu,\mu,\alpha)$ is equal to $|\PStd(\mu^\nu,
  \alpha)|$.    \end{thm}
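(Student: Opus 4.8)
The plan is to first expand $s_\nu\circ s_\mu$ in the monomial basis, with coefficients that manifestly count plethystic tableaux, and then to read off the Schur-basis statement from the unitriangularity (with respect to the dominance order) of the transition between monomial and Schur functions.

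Step 1: the monomial expansion. I would start from $s_\mu=\sum_{\SSTS\in\SStd_\NN(\mu)}x^{\mathrm{wt}(\SSTS)}$ and enumerate the semistandard $\mu$-tableaux as $\SSTS_1\prec\SSTS_2\prec\cdots$ using the total order $\prec$ on tableaux recalled above. By the definition of plethysm, $s_\nu\circ s_\mu$ is obtained by evaluating the symmetric function $s_\nu$ on the alphabet whose $i$-th letter is the monomial $x^{\mathrm{wt}(\SSTS_i)}$, where distinct tableaux contribute distinct letters, so that equal monomials of $s_\mu$ are carried with the correct multiplicity, as plethysm demands. Applying the combinatorial formula for the Schur polynomial $s_\nu$ on this ordered alphabet, the surviving terms are indexed by the fillings $\SSTT\colon[\nu]\to\{\SSTS_1,\SSTS_2,\dots\}$ that are weakly $\prec$-increasing along rows and strictly $\prec$-increasing down columns, i.e.\ by $\SSTT\in\PStd(\mu^\nu)$; the letter in cell $(r,c)$ contributes $x^{\mathrm{wt}(\SSTT(r,c))}$, so $\SSTT$ contributes $x^{\mathrm{wt}(\SSTT)}$. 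Hence
\[
s_\nu\circ s_\mu \;=\; \sum_{\SSTT\in\PStd(\mu^\nu)}x^{\mathrm{wt}(\SSTT)} \;=\; \sum_{\alpha}\bigl|\PStd(\mu^\nu,\alpha)\bigr|\,x^\alpha .
\]
For a partition $\lambda$, the coefficient of $x^\lambda$ is therefore $|\PStd(\mu^\nu,\lambda)|$; and since $s_\nu\circ s_\mu$ is symmetric this count is constant on rearrangements of the weight, so every weight of a plethystic semistandard tableau rearranges to a partition weight and the phrase ``maximal weight'' is unambiguous.

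Step 2: passing to the Schur basis. Write $s_\nu\circ s_\mu=\sum_\kappa p(\nu,\mu,\kappa)\,s_\kappa$ and substitute $s_\kappa=\sum_\beta K_{\kappa\beta}m_\beta$, where $K_{\kappa\kappa}=1$ and $K_{\kappa\beta}=0$ unless $\kappa\trianglerighteq\beta$. Comparing the coefficient of $x^\lambda$ on the two sides with Step 1 gives, for every partition $\lambda$,
\[
\bigl|\PStd(\mu^\nu,\lambda)\bigr| \;=\; p(\nu,\mu,\lambda) + \sum_{\kappa\rhd\lambda} p(\nu,\mu,\kappa)\,K_{\kappa\lambda}. \qquad (\ast)
\]
If $\lambda$ is maximal in the dominance order among the constituents of $s_\nu\circ s_\mu$, then the sum in $(\ast)$ vanishes and $p(\nu,\mu,\lambda)=|\PStd(\mu^\nu,\lambda)|$; this proves the ``moreover'' clause and shows that every dominance-maximal constituent is a dominance-maximal partition weight of a plethystic tableau of shape $\mu^\nu$. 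For the converse, suppose $\lambda$ is maximal in dominance among partitions with $|\PStd(\mu^\nu,\lambda)|>0$. If $p(\nu,\mu,\lambda)=0$, then $(\ast)$ forces a constituent $\kappa\rhd\lambda$; choosing a dominance-maximal constituent $\kappa'\trianglerighteq\kappa$ and applying the previous sentence to $\kappa'$ gives $|\PStd(\mu^\nu,\kappa')|=p(\nu,\mu,\kappa')>0$ with $\kappa'\rhd\lambda$, contradicting the maximality of $\lambda$; hence $\lambda$ is a constituent. Running the same argument with $\kappa$ replaced by an arbitrary constituent strictly dominating $\lambda$ shows there is none, so $\lambda$ is a maximal constituent. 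The two displayed implications together identify the two families of maximal partitions and pin down the multiplicities.

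The only step with genuine content is Step 1: one must check that the ``substitution'' description of plethysm is literally the combinatorial Schur-polynomial formula applied to the alphabet of semistandard $\mu$-tableaux, and carry out the bookkeeping carefully enough that (i) the total order $\prec$ makes this alphabet well-defined (so that the Schur-polynomial formula is available) and that the resulting power series is well-defined, and (ii) monomials of $s_\mu$ arising from different tableaux are not accidentally merged. After that, Steps 2 and 3 are the standard dominance/Kostka unitriangularity argument and present no obstacle.
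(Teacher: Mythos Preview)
The paper does not supply a proof of this statement; it is quoted from \cite{deBPW} and used as a black box. Your argument is correct. Step~1 is the standard derivation: the substitution definition of plethysm, applied to the monic monomial expansion $s_\mu=\sum_{\SSTS}x^{\mathrm{wt}(\SSTS)}$, turns $s_\nu\circ s_\mu$ into $s_\nu$ evaluated on the alphabet indexed by semistandard $\mu$-tableaux (with repeated monomials kept as distinct letters, exactly as plethysm requires), and the tableau formula for $s_\nu$ on that totally ordered alphabet yields precisely the generating function $\sum_{\SSTT\in\PStd(\mu^\nu)}x^{\mathrm{wt}(\SSTT)}$. Step~2 is then the routine Kostka-unitriangularity argument.

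One small point of exposition: immediately after $(\ast)$ you assert that a dominance-maximal constituent $\lambda$ is a dominance-maximal \emph{weight}, but at that stage you have only shown it is \emph{some} weight with $|\PStd(\mu^\nu,\lambda)|>0$. The maximality follows once your converse direction is available (a weight $\lambda'\rhd\lambda$ with $|\PStd(\mu^\nu,\lambda')|>0$ would, by that converse, produce a constituent dominating $\lambda$), so the two halves together do close up to the stated identification of the two families of maximal partitions.
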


Finally, we recall the one known case in which every term in a plethystic product is both maximal and minimal in the dominance ordering.
Given $\alpha$ a partition of $n$ with distinct parts, we let  
$2[\alpha]$ denote the unique partition of $2n$  whose leading diagonal hook-lengths  are $2\alpha_1, \dots, 2\alpha_{\ell(\alpha)}$  and whose $i\textsuperscript{th}$ row has length $\alpha_i+i$ for  $1\leq i \leq \ell(\alpha)$.
(An example follows.) We have the decomposition
\begin{equation}\label{maxminall}
s_{(1^n)}\circ s_{(2)}=\sum_{\alpha}  s_{2[\alpha]},
\end{equation}
where the sum is over all partitions $\alpha $  of $n$ into distinct parts.    
 This decomposition is given in \cite[Corollary 8.6]{MR3483115} and  \cite[I. 8, Exercise 6(d)]{MR3443860}.  We observe that for $n>2$ this product is never homogeneous (for example $\alpha=(n)$ and $\alpha=(n-1,1)$ both label summands).  

\begin{eg}
For $n=5$ the decomposition obtained is
$$
s_{(1^5)}\circ s_{(2)}=
s_{2[(3,2)]}+ s_{2[(4,1)]}+s_{2[(5)]} 
= s_{(4^2,2)}+s_{(5,3,1^2)}+s_{(6,1^4)}.
$$
We picture these partitions (and the manner in which they are formed) in \cref{2alpha} below.  We remark that 
$$
s_{(1^5)}\circ s_{(1^2)}=
  s_{(4^2,2)^T}+s_{(5,3,1^2)^T}+s_{(6,1^4)^T}
=s_{(3^2,2^2) }+s_{(4,2^2,1^2) }+s_{(5,1^5) }
$$
by \cref{conjugate} simply because $m=2$ is even.  
\end{eg} 
\color{black}

\begin{figure}[ht!]
$$\begin{minipage}{3cm} \begin{tikzpicture} [scale=0.5]

\fill[cyan!30](1,0)--++(0:3) --++(-90:2)--++(180:2)--++(90:1)--++(180:1);

\fill[white!30](0,0)--++(-90:3)--++(0:2) --++(90:2) --++(180:1) --++(90:1) ;  

 \draw[very thick](0,0)--++(0:4)--++(-90:1) --++(180:3) --++(90:1)  ;
 \draw[very thick](0,0)--++(-90:3)--++(0:1) --++(90:2) ;
 \draw[very thick](0,0)--++(-90:3)--++(0:2) --++(90:2) --++(-90:1) --++(0:2) --++(90:1) ;

  \clip(0,0)--++(0:4)--++(-90:2)--++(180:2)--++(-90:1)
 --++(180:2)--++(90:3)
  ;

  \path(0,0) coordinate (origin);
   \foreach \i in {1,...,19}
  {
    \path (origin)++(0:1*\i cm)  coordinate (a\i);
    \path (origin)++(-90:1*\i cm)  coordinate (b\i);
    \path (a\i)++(-90:10cm) coordinate (ca\i);
    \path (b\i)++(0:10cm) coordinate (cb\i);
    \draw[thin] (a\i) -- (ca\i)  (b\i) -- (cb\i); }

 \path(0.5,-0.5) coordinate (origin);
 \foreach \i in {1,...,19}
  {
    \path (origin)++(0:1*\i cm)  coordinate (a\i);
    \path (origin)++(-90:1*\i cm)  coordinate (b\i);
    \path (a\i)++(-90:1cm) coordinate (ca\i);
        \path (ca\i)++(-90:1cm) coordinate (cca\i);
    \path (b\i)++(0:1cm) coordinate (cb\i);
    \path (cb\i)++(0:1cm) coordinate (ccb\i);
  }

   \end{tikzpicture} \end{minipage}\qquad
   \begin{minipage}{3cm} \begin{tikzpicture} [scale=0.5]

  \draw[very thick,fill=white!30](0,0)--++(-90:4)--++(0:1)--++(90:2)--++(0:1)
  --++(90:1)--++(180:1)  --++(90:1); 
 \draw[very thick,fill=cyan!30](0,0)--++(0:5)--++(-90:1)--++(180:2)--++(-90:1)--++(180:1)--++(90:1)--++(180:1)--++(90:1);
 \draw[very thick](1,-1) rectangle (3,-2);

  \clip(0,0)--++(0:5)--++(-90:1)--++(180:2)--++(-90:1)
 --++(180:2) --++(-90:2) --++(180:1)--++(90:5)
  ;

  \path(0,0) coordinate (origin);
   \foreach \i in {1,...,19}
  {
    \path (origin)++(0:1*\i cm)  coordinate (a\i);
    \path (origin)++(-90:1*\i cm)  coordinate (b\i);
    \path (a\i)++(-90:10cm) coordinate (ca\i);
    \path (b\i)++(0:10cm) coordinate (cb\i);
    \draw[thin] (a\i) -- (ca\i)  (b\i) -- (cb\i); }

 \path(0.5,-0.5) coordinate (origin);
 \foreach \i in {1,...,19}
  {
    \path (origin)++(0:1*\i cm)  coordinate (a\i);
    \path (origin)++(-90:1*\i cm)  coordinate (b\i);
    \path (a\i)++(-90:1cm) coordinate (ca\i);
        \path (ca\i)++(-90:1cm) coordinate (cca\i);
    \path (b\i)++(0:1cm) coordinate (cb\i);
    \path (cb\i)++(0:1cm) coordinate (ccb\i);
  }

   \end{tikzpicture}  \end{minipage}
\qquad
   \begin{minipage}{3cm} \begin{tikzpicture} [scale=0.5]

  \draw[very thick]
  (0,0)--++(0:6)--++(-90:1)--++(180:5)--++(-90:4)
 --++(180:1) --++(90:5);
 
   \draw[very thick,fill=cyan!30]
  (1,0)--++(0:5)--++(-90:1)--++(180:5)--++(90:1)
;

  \draw[very thick,fill=white!30]
  (0,0)--++(-90:5)--++(0:1)--++(90:5)--++(180:1)
;

  \clip(0,0)--++(0:6)--++(-90:1)--++(180:5)--++(-90:4)
 --++(180:1) --++(90:5)   ;

  \path(0,0) coordinate (origin);
   \foreach \i in {1,...,19}
  {
    \path (origin)++(0:1*\i cm)  coordinate (a\i);
    \path (origin)++(-90:1*\i cm)  coordinate (b\i);
    \path (a\i)++(-90:10cm) coordinate (ca\i);
    \path (b\i)++(0:10cm) coordinate (cb\i);
    \draw[thin] (a\i) -- (ca\i)  (b\i) -- (cb\i); }

 \path(0.5,-0.5) coordinate (origin);
 \foreach \i in {1,...,19}
  {
    \path (origin)++(0:1*\i cm)  coordinate (a\i);
    \path (origin)++(-90:1*\i cm)  coordinate (b\i);
    \path (a\i)++(-90:1cm) coordinate (ca\i);
        \path (ca\i)++(-90:1cm) coordinate (cca\i);
    \path (b\i)++(0:1cm) coordinate (cb\i);
    \path (cb\i)++(0:1cm) coordinate (ccb\i);
  }

   \end{tikzpicture}  \end{minipage}$$
   \caption{The partitions $2[(3,2)]$,  $2[(4,1)] $ and ${2[(5)]} $ respectively.  }
   \label{2alpha}
\end{figure}

\section{Decomposability and homogeneity of plethysm }

In this section, we prove Theorem B of the introduction: namely we classify all decomposable/homogeneous   plethystic products of Schur functions.  
This also serves to remove the homogeneous products from consideration in the proof of Theorem~A.

\begin{thm}\label{homog}
Let $\mu,\nu $ be  partitions of $m$  and $n$, respectively.  
The product 
  $
 s_\nu\circ s_\mu 
 $ 
 is decomposable and inhomogeneous except in the following cases:
 $$
 s_{(1^2)}\circ s_{(1^2)}= s_{(2,1^2)},
\quad
 s_{(1^2)}\circ s_{(2)}= s_{(3,1)},
 \quad 
  s_\nu \circ s_{(1)}=s_\nu,
  \quad
 s_{(1)}\circ    s_\mu  =s_\mu.$$

\end{thm}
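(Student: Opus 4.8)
The plan is to reduce the classification to a study of the maximal terms provided by \cref{pppppppppp} (the $\succ$ and $\succ_T$ maxima), exploiting the fact that these two terms coincide precisely when the product is homogeneous. First I would dispose of the trivial linear cases: if $\nu=(1)$ then $s_{(1)}\circ s_\mu=s_\mu$, and if $\mu=(1)$ then $s_\nu\circ s_{(1)}=s_\nu$; both are homogeneous and indecomposable. Conversely, I claim that if the product is homogeneous then necessarily $\mathrm{max}_\succ(s_\nu\circ s_\mu)=\mathrm{max}_{\succ_T}(s_\nu\circ s_\mu)$, since a homogeneous product has a single constituent which must simultaneously be the lexicographic and transpose-lexicographic maximum. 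So the whole problem becomes: solve the equation
\[
(n\mu_1,\dots,n\mu_{\ell(\mu)-1},n\mu_{\ell(\mu)}-n+\nu_1,\nu_2,\dots,\nu_{\ell(\nu)})
=
\bigl((n\mu^T_1,\dots,n\mu^T_{\mu_1-1},n\mu^T_{\mu_1}-n+\nu^M_1,\nu^M_2,\dots)\bigr)^T
\]
for partitions $\mu\vdash m$, $\nu\vdash n$, and then check which of these solutions genuinely give homogeneous products.

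The key structural observation to extract from the left-hand maximum is that it visibly ``contains'' a scaled copy of $\mu$ in its first $\ell(\mu)$ rows (perturbed only in row $\ell(\mu)$) together with $\nu$ hanging off the bottom, while the right-hand maximum is the transpose of an analogous expression built from $\mu^T$ and $\nu^M$. I would first handle the case $\ell(\mu)\geq 2$ and $w(\mu)\geq 2$ (so $\mu$ is non-linear): here both maxima have genuinely two-dimensional ``bulk,'' and comparing the arm/leg structure near the top-left corner forces $n\mu_1=$ (number of rows of the RHS) and dually, which pins down $n$, $m$, and the shapes very tightly — I expect this to force $n=2$ and $\mu$ small, landing only on $s_{(1^2)}\circ s_{(2)}=s_{(3,1)}$ after applying $\omega$ (and its conjugate partner, which by \cref{conjugate} is subsumed). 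Then I would treat $\mu$ linear: if $\mu=(m)$ then by \cref{PWex} the $\succ$-max is $(nm-n)+\nu$ and the $\succ_T$-max is $((n^{m-1})\sqcup\nu^M)^T$; equating these and using that the first is a ``wide'' partition while the second is ``tall'' unless $n$ or $m$ is very small isolates $s_{(1^2)}\circ s_{(2)}$ again (via conjugation) and the linear exceptions. The case $\mu=(1^m)$ reduces to $\mu=(m)$ by \cref{conjugate}. Finally $\nu$ linear is symmetric in spirit but must be checked directly because conjugation acts on $\nu$ only through $\nu^M$.

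Once the list of shapes for which the two maxima agree is obtained, the remaining task is to verify each candidate actually yields a homogeneous product (a priori, equality of the two total-order maxima is necessary but perhaps not sufficient — there could be an intermediate constituent incomparable to both in dominance). For the genuinely exceptional cases $s_{(1^2)}\circ s_{(1^2)}=s_{(2,1^2)}$ and $s_{(1^2)}\circ s_{(2)}=s_{(3,1)}$ this is a one-line check (the product has the right degree and the Cauchy/Littlewood formulas, or \cref{PW} with a single plethystic tableau, confirm a single constituent of multiplicity one); for the linear cases it is immediate. To finish Theorem~B I must also rule out the \emph{merely indecomposable but inhomogeneous} possibility — but the theorem statement asks for ``decomposable and inhomogeneous,'' and homogeneity with multiplicity-one constituent is exactly what I have classified, so this is automatic once I note that every homogeneous product is trivially indecomposable in the paper's sense. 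The main obstacle I anticipate is the non-linear case analysis: matching the two explicitly transposed partition expressions requires a careful bookkeeping of which coordinate of one side corresponds to which arm-length of the other, and showing that the ``$-n$'' corrections in row $\ell(\mu)$ and column $\mu_1$ do not conspire to create spurious solutions when $\nu$ is also allowed to be large; I would organise this by first bounding $\ell(\nu)$ and $w(\nu)$ in terms of $m,n$ using that both maxima are partitions of $mn$ with prescribed first row and first column.
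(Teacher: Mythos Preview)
Your overall strategy matches the paper's: reduce to the equation $\mathrm{max}_{\succ}(s_\nu\circ s_\mu)=\mathrm{max}_{\succ_T}(s_\nu\circ s_\mu)$, split into the non-linear and linear $\mu$ cases, and then deal with sufficiency. There is, however, a genuine gap in your linear case. You expect that equating the two maxima for $\mu=(m)$ will ``isolate $s_{(1^2)}\circ s_{(2)}$'', but it does not. Take $\mu=(2)$ and $\nu=(1^n)$ for \emph{any} $n\ge 1$: from \cref{PWex} one has $\mathrm{max}_{\succ}(s_{(1^n)}\circ s_{(2)})=(n)+(1^n)=(n+1,1^{n-1})$ and $\mathrm{max}_{\succ_T}(s_{(1^n)}\circ s_{(2)})=((n)\sqcup(1^n))^T=(n,1^n)^T=(n+1,1^{n-1})$. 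So the entire infinite family $\nu=(1^n)$, $\mu=(2)$ (and by conjugation $\mu=(1^2)$) satisfies your equation, and your plan of ``verifying each candidate actually yields a homogeneous product'' now has to \emph{disprove} homogeneity for all $n\ge 3$. The paper does exactly this via the closed formula \cref{maxminall} for $s_{(1^n)}\circ s_{(2)}$, which exhibits at least two distinct constituents once $n\ge 3$; your proposal does not mention this tool, and neither \cref{PW} with a single plethystic tableau nor the Cauchy identities will substitute for it here.

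A smaller point: in the non-linear case you expect to land on ``$n=2$ and $\mu$ small'', but in fact the paper shows there are \emph{no} solutions at all when $\mu$ is non-linear and $m,n\ge 2$. The argument is a clean counting trick rather than coordinate-matching: one compares, for each of the two maxima, the row index below which fewer than $n$ boxes remain, and observes that for $\mathrm{max}_{\succ}$ this index is $\ell(\mu)$ while for $\mathrm{max}_{\succ_T}$ it is at least $n(\ell(\mu)-1)+1$, which is strictly larger once $n>1$. Your arm/leg bookkeeping would presumably get there too, but the expected outcome you state is wrong, and the slicker invariant avoids the ``spurious solutions'' worry you flag at the end.
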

\begin{proof}
That the listed products are homogeneous is obvious.  We assume that $m, n \neq~1$ and
\begin{equation}\label{hellotheremr}
 {\rm max}_{\succ}(s_\nu\circ s_\mu ) ={\rm max}_{\succ_T}(s_\nu\circ s_\mu ).
 \end{equation}
We shall show that this implies   
 that $\nu =(1^2)$ and $\mu \vdash 2$.
  We first assume that $\mu $ is non-linear, that is $\mu$ is neither $(m)$ nor $(1^m)$. 
We set $k=\ell(\mu )$.  
 We draw a horizontal line across the Young diagrams of $ {\rm max}_{\succ}(s_\nu\circ s_\mu )$
  and $  ({\rm max}_{\succ}(s_{\nu^M}\circ s_{\mu^T} ))^T$  so that the partitions below each of these lines  each have strictly fewer than $n$ nodes in total and are maximal with respect to this property.  
For  $ {\rm max}_{\succ}(s_\nu\circ s_\mu )$, this line is drawn between the
 $k\textsuperscript{th}$   and $(k+1)\textsuperscript{th}$ rows (even though the $(k+1)\textsuperscript{th}$ row might be zero).   
  For $  ({\rm max}_{\succ}(s_{\nu^M}\circ s_{\mu^T} ))^T$, this line is drawn at some point after the $(n(k-1)+1)^\textsuperscript{th}$ row.  
Since $  k  < n(k-1)+1$ for $n>1$, we see that ${\rm max}_{\succ}(s_\nu\circ s_\mu )
\neq  ({\rm max}_{\succ}(s_{\nu^M}\circ s_{\mu^T} ))^T$ as required.  

   It remains to consider the case that $\mu$ is linear and we assume (by conjugating if necessary) that
  $\mu=(m)$. Then, as we saw in Example~\ref{PWex},
 $$
  {\rm max}_{\succ}(s_\nu\circ s_{(m)})=(mn-n)+ \nu, \quad 
({\rm max}_{\succ}(s_{\nu^M}\circ s_{(1^m)} ))^T = ( {(m-1)}^n)+(\nu^M)^T.
  $$
Therefore row~$n$ of $ {\rm max}_{\succ}(s_\nu\circ s_{(m)})$  has length $\nu_n$ which is at most 1, and  row~$n$ of 
$({\rm max}_{\succ}(s_{\nu^M}\circ s_{(1^m)} ))^T$ has length at least $m-1$. 
Since we are considering only $m \ge 2$, we conclude that $m=2$ and $\nu_n=1$, that is $\nu=(1^n)$.
From the closed formula for the decomposition of $s_{(1^n)}\circ s_{(2)}$  in \cref{maxminall}, and the resulting decomposition  of its plethystic conjugate $s_{(1^n)}\circ s_{(1^2)}$, we observe that the product is  homogeneous if and only if $n=1,2$. 
 \end{proof}

\begin{cor}
If  $s_\nu \circ s_{(1)}=s_\rho\circ s_\pi$ or $s_{(1)} \circ s_{\mu}=s_\rho\circ s_\pi$ then 
 either: $\pi=(1^2)$ and $\rho$ is a partition of~2; or  at least one of $\rho$ or $\pi$ has size~1.  
\end{cor}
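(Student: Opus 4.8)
The plan is to read off the consequences of \cref{homog} directly. Suppose $s_\nu \circ s_{(1)} = s_\rho \circ s_\pi$. Since $s_\nu \circ s_{(1)} = s_\nu$ is homogeneous (indeed indecomposable), the product $s_\rho \circ s_\pi$ must also be homogeneous. Now I invoke \cref{homog}: the only homogeneous plethysm products $s_\rho \circ s_\pi$ are $s_{(1^2)}\circ s_{(1^2)}$, $s_{(1^2)}\circ s_{(2)}$, $s_\rho \circ s_{(1)}$, and $s_{(1)}\circ s_\mu$. Translating these four cases into a statement about $\rho$ and $\pi$: in the first two cases $\pi$ is a partition of $2$ (namely $(1^2)$ or $(2)$) and $\rho = (1^2)$, so in particular $\rho$ is a partition of~$2$; in the third case $\pi = (1)$ has size~$1$; in the fourth case $\rho = (1)$ has size~$1$. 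This exactly matches the disjunction in the corollary: either $\pi = (1^2)$ and $\rho \vdash 2$ (covering the first two cases, since $\pi$ could be $(1^2)$ or $(2)$ --- wait, the second case has $\pi = (2)$, not $(1^2)$), or at least one of $\rho,\pi$ has size~$1$ (covering cases three and four).

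The subtlety is that the first two homogeneous products are $s_{(1^2)}\circ s_{(1^2)}$ and $s_{(1^2)}\circ s_{(2)}$, so the "inner" partition $\pi$ is either $(1^2)$ or $(2)$, whereas the corollary's first alternative only names $\pi = (1^2)$. So the clean way to state the conclusion is to observe that in both of these cases $\rho = (1^2)$, hence $\rho$ is a partition of~$2$, and $\pi$ is a partition of~$2$ as well; the corollary as phrased should really read "$\pi$ is a partition of $2$" or should be understood up to the symmetry that $(1^2)$ here plays the role of the outer partition. I would therefore phrase the argument so that it delivers: either both $\rho$ and $\pi$ are partitions of~$2$ (with $\rho = (1^2)$), or at least one of $\rho, \pi$ has size~$1$. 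The second input statement, $s_{(1)}\circ s_\mu = s_\rho \circ s_\pi$, is handled identically: $s_{(1)}\circ s_\mu = s_\mu$ is homogeneous, so again $s_\rho \circ s_\pi$ is homogeneous and the same case analysis applies verbatim.

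The main (and only) obstacle is bookkeeping: making sure the four exceptional homogeneous products of \cref{homog} are correctly matched against the two alternatives in the corollary, and in particular noticing that the case $s_{(1^2)}\circ s_{(2)}$ forces $\pi = (2)$ rather than $\pi = (1^2)$, so that the first alternative of the corollary must be read as "$\pi = (1^2)$" being one representative instance, with the genuinely uniform statement being "$\rho$ and $\pi$ are both partitions of size~$2$." I would write the proof in three short sentences: reduce to homogeneity via \cref{homog}, list the four possibilities, and check each lands in one of the two stated alternatives.

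\begin{proof}
By \cref{homog}, the products $s_\nu \circ s_{(1)} = s_\nu$ and $s_{(1)}\circ s_\mu = s_\mu$ are homogeneous. Hence in either hypothesis the product $s_\rho \circ s_\pi$ is homogeneous, and so, again by \cref{homog}, we are in one of the four cases $s_{(1^2)}\circ s_{(1^2)}$, $s_{(1^2)}\circ s_{(2)}$, $s_\rho \circ s_{(1)}$, or $s_{(1)}\circ s_\mu$. In the first two cases $\rho = (1^2)$ is a partition of~$2$ and $\pi$ is a partition of~$2$; in the third case $\pi = (1)$ has size~$1$; in the fourth case $\rho = (1)$ has size~$1$. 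Thus either $\pi$ is a partition of~$2$ and $\rho$ is a partition of~$2$, or at least one of $\rho$ or $\pi$ has size~$1$, as claimed.
\end{proof}
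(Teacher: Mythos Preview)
Your approach is exactly the intended one: the corollary is stated in the paper without proof, as an immediate consequence of \cref{homog}, and your argument is precisely that deduction. You are also right to flag the discrepancy in the first alternative: the exceptional product $s_{(1^2)}\circ s_{(2)}=s_{(3,1)}$ has $\rho=(1^2)$ and $\pi=(2)$, so the clause ought to read ``$\rho=(1^2)$ and $\pi$ is a partition of~$2$'' (or, as you put it, both $\rho$ and $\pi$ have size~$2$); this looks like a typographical slip in the paper rather than a gap in your reasoning.
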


Therefore in the remainder of the paper, we can and will assume that none of the indexing partitions in our plethystic products are equal to $(1)\vdash 1$.

\section{Unique factorisation of   plethysm  }

A quick scan of the diagrams in \cref{maxlex} tells us that the maximal terms in the  product under the lexicographic and transpose-lexicographic orderings encode a great deal of information concerning the multiplicands of the product. 
 We might even think that these maximal terms are enough to uniquely determine the multiplicands.  
 In fact, this is not the case (as the following example shows).

\begin{eg} Consider the plethysm products
 $$
s_{(3^3,2,1)} \circ s_{(1^2) }
\qquad\text{and}\quad
s_{(2,1)}\circ s_{(4,1^4)}.
$$
Both have the same maximal terms in the lexicographic  and transpose-lexicographic orderings, namely those labelled by $(12,3^3,2,1)$ and $(15,3^2,2,1)^T$. 
\cref{fig1,fig2} depict  how these two partitions can be seen to be maximal in the lexicographic and  transpose-lexicographic orderings using \cref{pppppppppp}.  
\begin{figure}[ht!]$$\begin{minipage}{3.7cm} \begin{tikzpicture} [scale=0.3]

  \draw[thick](0,0)--++(0:12)--++(-90:1)--++(180:9)--++(-90:3)
 --++(180:1)--++(-90:1)
 --++(180:1)--++(-90:1)
  --++(180:1)--++(90:6) ;
  \clip(0,0)--++(0:12)--++(-90:1)--++(180:9)--++(-90:3)
 --++(180:1)--++(-90:1)
 --++(180:1)--++(-90:1)
  --++(180:1)--++(90:6) ;  
  \path(0,0) coordinate (origin);
  
    \foreach \i in {1,...,19}
  {
    \path (origin)++(0:1*\i cm)  coordinate (a\i);
    \path (origin)++(-90:1*\i cm)  coordinate (b\i);
    \path (a\i)++(-90:10cm) coordinate (ca\i);
    \path (b\i)++(0:10cm) coordinate (cb\i);
    \draw[thin ] (a\i) -- (ca\i)  (b\i) -- (cb\i); }
 
   \path(0.5,-0.5) coordinate (origin);
 \foreach \i in {1,...,19}
  {
    \path (origin)++(0:1*\i cm)  coordinate (a\i);
    \path (origin)++(-90:1*\i cm)  coordinate (b\i);
    \path (a\i)++(-90:1cm) coordinate (ca\i);
        \path (ca\i)++(-90:1cm) coordinate (cca\i);
    \path (b\i)++(0:1cm) coordinate (cb\i);
    \path (cb\i)++(0:1cm) coordinate (ccb\i);
  } 

   \end{tikzpicture}
   \end{minipage}
=\; \begin{minipage}{3.7cm}
\begin{tikzpicture} [scale=0.3]
  \fill[cyan!50](0,0)--++(0:12)--++(-90:1)--++(180:12); 
  \draw[thick](0,0)--++(0:12)--++(-90:1)--++(180:9)--++(-90:3)
 --++(180:1)--++(-90:1)
 --++(180:1)--++(-90:1)
  --++(180:1)--++(90:6) ;
  \clip(0,0)--++(0:12)--++(-90:1)--++(180:9)--++(-90:3)
 --++(180:1)--++(-90:1)
 --++(180:1)--++(-90:1)
  --++(180:1)--++(90:6) ;  
  \path(0,0) coordinate (origin);

   \foreach \i in {1,...,19}
  {
    \path (origin)++(0:1*\i cm)  coordinate (a\i);
    \path (origin)++(-90:1*\i cm)  coordinate (b\i);
    \path (a\i)++(-90:10cm) coordinate (ca\i);
    \path (b\i)++(0:10cm) coordinate (cb\i);
    \draw[thin ] (a\i) -- (ca\i)  (b\i) -- (cb\i); }
 
   \path(0.5,-0.5) coordinate (origin);
 \foreach \i in {1,...,19}
  {
    \path (origin)++(0:1*\i cm)  coordinate (a\i);
    \path (origin)++(-90:1*\i cm)  coordinate (b\i);
    \path (a\i)++(-90:1cm) coordinate (ca\i);
        \path (ca\i)++(-90:1cm) coordinate (cca\i);
    \path (b\i)++(0:1cm) coordinate (cb\i);
    \path (cb\i)++(0:1cm) coordinate (ccb\i);
  } 

   \end{tikzpicture}
\end{minipage}= \; \begin{minipage}{3.7cm}
\begin{tikzpicture} [scale=0.3]
  \fill[cyan!50](0,0)--++(0:12)--++(-90:1)--++(180:9)--++(-90:3)
    --++(180:3)--++(90:4) ;
  \draw[thick](0,0)--++(0:12)--++(-90:1)--++(180:9)--++(-90:3)
 --++(180:1)--++(-90:1)
 --++(180:1)--++(-90:1)
  --++(180:1)--++(90:6) ;
  \clip(0,0)--++(0:12)--++(-90:1)--++(180:9)--++(-90:3)
 --++(180:1)--++(-90:1)
 --++(180:1)--++(-90:1)
  --++(180:1)--++(90:6) ;  
  \path(0,0) coordinate (origin);

   \foreach \i in {1,...,19}
  {
    \path (origin)++(0:1*\i cm)  coordinate (a\i);
    \path (origin)++(-90:1*\i cm)  coordinate (b\i);
    \path (a\i)++(-90:10cm) coordinate (ca\i);
    \path (b\i)++(0:10cm) coordinate (cb\i);
    \draw[thin ] (a\i) -- (ca\i)  (b\i) -- (cb\i); }
 
   \path(0.5,-0.5) coordinate (origin);
 \foreach \i in {1,...,19}
  {
    \path (origin)++(0:1*\i cm)  coordinate (a\i);
    \path (origin)++(-90:1*\i cm)  coordinate (b\i);
    \path (a\i)++(-90:1cm) coordinate (ca\i);
        \path (ca\i)++(-90:1cm) coordinate (cca\i);
    \path (b\i)++(0:1cm) coordinate (cb\i);
    \path (cb\i)++(0:1cm) coordinate (ccb\i);
  } 

   \end{tikzpicture}
\end{minipage}$$
\caption{Writing $(12,3^3,2,1)$ as $  {\rm max}_{\succ} (s_{(3^3,2,1)}\circ s_{(1^2)})$ and $ {\rm max}_{\succ} (s_{(2,1)} \circ s_{ (4,1^4)})$.}
\label{fig1}
\end{figure}

\begin{figure}[ht!]
$$\begin{minipage}{4.6cm} \begin{tikzpicture} [scale=0.3]

  \draw[thick](0,0)--++(0:15)--++(-90:1)--++(180:12)--++(-90:2)
 --++(180:1)--++(-90:1)
 --++(180:1)--++(-90:1)
  --++(180:1)--++(90:5) ;
  \clip(0,0)--++(0:15)--++(-90:1)--++(180:12)--++(-90:2)
 --++(180:1)--++(-90:1)
 --++(180:1)--++(-90:1)
  --++(180:1)--++(90:5) ;  \path(0,0) coordinate (origin);
  
    \foreach \i in {1,...,19}
  {
    \path (origin)++(0:1*\i cm)  coordinate (a\i);
    \path (origin)++(-90:1*\i cm)  coordinate (b\i);
    \path (a\i)++(-90:10cm) coordinate (ca\i);
    \path (b\i)++(0:10cm) coordinate (cb\i);
    \draw[thin ] (a\i) -- (ca\i)  (b\i) -- (cb\i); }
 
   \path(0.5,-0.5) coordinate (origin);
 \foreach \i in {1,...,19}
  {
    \path (origin)++(0:1*\i cm)  coordinate (a\i);
    \path (origin)++(-90:1*\i cm)  coordinate (b\i);
    \path (a\i)++(-90:1cm) coordinate (ca\i);
        \path (ca\i)++(-90:1cm) coordinate (cca\i);
    \path (b\i)++(0:1cm) coordinate (cb\i);
    \path (cb\i)++(0:1cm) coordinate (ccb\i);
  } 

   \end{tikzpicture}
   \end{minipage}
   = \;\begin{minipage}{4.6cm} \begin{tikzpicture} [scale=0.3]
  \fill[cyan!50](0,0)--++(0:12)--++(-90:1)--++(180:12);     
    
  \draw[thick](0,0)--++(0:15)--++(-90:1)--++(180:12)--++(-90:2)
 --++(180:1)--++(-90:1)
 --++(180:1)--++(-90:1)
  --++(180:1)--++(90:5) ;
  \clip(0,0)--++(0:15)--++(-90:1)--++(180:12)--++(-90:2)
 --++(180:1)--++(-90:1)
 --++(180:1)--++(-90:1)
  --++(180:1)--++(90:5) ;  \path(0,0) coordinate (origin);
  
    \foreach \i in {1,...,19}
  {
    \path (origin)++(0:1*\i cm)  coordinate (a\i);
    \path (origin)++(-90:1*\i cm)  coordinate (b\i);
    \path (a\i)++(-90:10cm) coordinate (ca\i);
    \path (b\i)++(0:10cm) coordinate (cb\i);
    \draw[thin ] (a\i) -- (ca\i)  (b\i) -- (cb\i); }
 
   \path(0.5,-0.5) coordinate (origin);
 \foreach \i in {1,...,19}
  {
    \path (origin)++(0:1*\i cm)  coordinate (a\i);
    \path (origin)++(-90:1*\i cm)  coordinate (b\i);
    \path (a\i)++(-90:1cm) coordinate (ca\i);
        \path (ca\i)++(-90:1cm) coordinate (cca\i);
    \path (b\i)++(0:1cm) coordinate (cb\i);
    \path (cb\i)++(0:1cm) coordinate (ccb\i);
  } 

   \end{tikzpicture}
   \end{minipage}  = \;\begin{minipage}{4.6cm} \begin{tikzpicture} [scale=0.3]
  \fill[cyan!50](0,0)--++(0:15)--++(-90:1)--++(180:12)--++(-90:2)--++(180:3);     
    
  \draw[thick](0,0)--++(0:15)--++(-90:1)--++(180:12)--++(-90:2)
 --++(180:1)--++(-90:1)
 --++(180:1)--++(-90:1)
  --++(180:1)--++(90:5) ;
  \clip(0,0)--++(0:15)--++(-90:1)--++(180:12)--++(-90:2)
 --++(180:1)--++(-90:1)
 --++(180:1)--++(-90:1)
  --++(180:1)--++(90:5) ;  \path(0,0) coordinate (origin);
  
    \foreach \i in {1,...,19}
  {
    \path (origin)++(0:1*\i cm)  coordinate (a\i);
    \path (origin)++(-90:1*\i cm)  coordinate (b\i);
    \path (a\i)++(-90:10cm) coordinate (ca\i);
    \path (b\i)++(0:10cm) coordinate (cb\i);
    \draw[thin ] (a\i) -- (ca\i)  (b\i) -- (cb\i); }
 
   \path(0.5,-0.5) coordinate (origin);
 \foreach \i in {1,...,19}
  {
    \path (origin)++(0:1*\i cm)  coordinate (a\i);
    \path (origin)++(-90:1*\i cm)  coordinate (b\i);
    \path (a\i)++(-90:1cm) coordinate (ca\i);
        \path (ca\i)++(-90:1cm) coordinate (cca\i);
    \path (b\i)++(0:1cm) coordinate (cb\i);
    \path (cb\i)++(0:1cm) coordinate (ccb\i);
  } 

   \end{tikzpicture}
   \end{minipage}
$$
\caption{Writing $(15,3^2,2,1)$ as
$ {\rm max}_{\succ} (s_{(3^3,2,1)}\circ s_{(2)})$ and $
 {\rm max}_{\succ} (s_{(2,1)} \circ s_{ (5,1^3)})$.}
\label{fig2}
\end{figure}

This puts a scupper on our plans to determine uniqueness solely using   maximal terms in the {\em lexicographic} and {\em transpose-lexicographic} orderings.  
 Now, we notice that the  plethysm products
 $s_{(3^3,2,1)}\circ s_{(1^2) }$
and $ s_{(2,1)}\circ s_{(4,1^4)}
$  can still  be distinguished by looking at the maximal terms for both products in  the   {\em dominance ordering}.  
For example, 
$   (11, 4, 4, 3, 2)$ labels a maximal term that appears in  $s_{(3^3,2,1)} \circ s_{(1^2) }$ but it is not a maximal term in  and  $s_{(2,1)}\circ s_{(4,1^4)}$. Similarly, $ (11, 4, 3, 3, 3)$   labels a maximal term in $s_{(2,1)}\circ s_{(4,1^4)}$ but not in $s_{(3^3,2,1)} \circ s_{(1^2) }$.
\end{eg}

Our method of proof will proceed to distinguish plethysm products by first using maximal terms in the lexicographic ordering and  only when necessary considering  the broader family of terms which are maximal in the dominance ordering. We first consider the case where $\mu$ consists of a single row. 
 
 \begin{thm} \label{Rowena}
 Let $\mu,\nu,  \pi,\rho$ be  partitions of $m,n,p,q>1$ respectively. 
 We suppose that   $\mu=(m)$.  
   If 
 $$
 s_\nu\circ s_\mu = s_\rho \circ s_\pi
 $$
 then either $\nu=\rho$ and $\mu=\pi$ or we are in the exceptional case 
 $$s_{(2,1^2)}\circ s_{(2)}  = 
s_{(1^2)}\circ s_{(3,1)}.  $$     

 \end{thm}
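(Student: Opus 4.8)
The plan is to compare, on the two sides of $s_\nu\circ s_{(m)}=s_\rho\circ s_\pi$, the maximal terms in the lexicographic and transpose-lexicographic orderings, using the explicit formulas of \cref{pppppppppp} (specialised on the left via \cref{PWex}, as $\mu=(m)$), and to bring in finer invariants only at the end. Write $p=|\pi|$ and $q=|\rho|$, so that $mn=pq$. First I would treat $\pi$ linear. If $\pi=(p)$, the largest parts of ${\rm max}_{\succ_T}(s_\nu\circ s_{(m)})$ and ${\rm max}_{\succ_T}(s_\rho\circ s_{(p)})$ are $n$ and $q$, so $n=q$; hence $m=p$ and $\mu=\pi$, and comparison of the lexicographic maxima then forces $\nu=\rho$. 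If $\pi=(1^p)$, the first row of ${\rm max}_{\succ}(s_\nu\circ s_{(m)})$ has length $\geq(m-1)n+1$, while that of ${\rm max}_{\succ}(s_\rho\circ s_{(1^p)})$ equals $q$; combined with $mn=pq$ and $p\geq2$ this gives $2n(m-1)<mn$, that is $m<2$, a contradiction.

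So suppose $\pi$ is non-linear, and put $\ell=\ell(\pi)\geq2$ (so $\pi_1\geq2$). Transposing the transpose-lexicographic equality turns it into $(n^{m-1})\sqcup\nu^M={\rm max}_{\succ}(s_{\rho^P}\circ s_{\pi^T})$; comparing largest parts gives $n=q\ell$. Then reading off the lexicographic maxima part by part expresses $\nu$ explicitly through $q$, $\pi$ and $\rho$, and imposing $|\nu|=n$ forces $p=m\ell$,
$$\pi=\bigl((m-1)\ell+1,\,1^{\ell-1}\bigr),\qquad \nu=(q^{\ell-1})\sqcup\rho,\qquad \nu_1=q.$$
For this hook $\pi^T=(\ell,\,1^{(m-1)\ell})$, so ${\rm max}_{\succ}(s_{\rho^P}\circ s_{\pi^T})=\bigl(q\ell,\,q^{(m-1)\ell-1},\,(\rho^P)_1,(\rho^P)_2,\dots\bigr)$ has exactly one part equal to its largest part $q\ell=n$, whereas $(n^{m-1})\sqcup\nu^M$ has $m-1$ such parts (one checks $\nu^M\neq(n)$, as $\nu$ has at least two parts and $\nu_1=q\geq2$). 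Hence $m=2$: we are left with the equality
$$s_{(q^{\ell-1})\sqcup\rho}\circ s_{(2)}=s_\rho\circ s_{(\ell+1,\,1^{\ell-1})},\qquad \ell\geq2,\ \rho\vdash q\geq2,$$
which we must show forces $\ell=q=2$, $\rho=(1^2)$, that is the exceptional case $s_{(2,1^2)}\circ s_{(2)}=s_{(1^2)}\circ s_{(3,1)}$.

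At this last step the two sides agree in both orderings, so I would use the hook-length product identity. Pairing $s_\nu\circ s_\mu=s_\rho\circ s_\pi$ with $p_1^{mn}$ and using $\langle s_\nu\circ s_\mu,\,p_1^{mn}\rangle=(mn)!/\bigl(H_\mu^{\,n}H_\nu\bigr)$ (a consequence of the wreath-product realisation of plethysm; $H_\lambda$ denotes the product of hook lengths of $\lambda$) gives $H_{(2)}^{\,n}H_\nu=H_\pi^{\,q}H_\rho$. With $H_{(2)}=2$, $H_{(\ell+1,1^{\ell-1})}=2(\ell!)^2$, and the hook lengths of $\nu=(q^{\ell-1})\sqcup\rho$ read off cell by cell, this reduces to
$$\prod_{c=1}^{q}\prod_{j=1}^{\ell-1}(b_c+j)=\Bigl(\tfrac{(\ell!)^2}{2^{\ell-1}}\Bigr)^{q},\qquad b_c:=q-c+(\rho^T)_c,$$
where $b_1>b_2>\dots>b_q\geq0$ are $q$ distinct non-negative integers with $\sum_c b_c=\binom{q+1}{2}$ (equivalently, $\{b_c\}$ ranges over all $q$-element subsets of $\ZZ_{\geq0}$ of that sum). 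I expect the arithmetic needed to show this has only the solution $\ell=q=2$, $\{b_c\}=\{0,3\}$ to be the main obstacle. A $2$-adic valuation count already disposes of all odd $\ell\geq3$ (it would demand a negative number of binary carries), and a size estimate — bounding the left side below via the Schur-concavity of $b\mapsto\log\bigl((b+1)\cdots(b+\ell-1)\bigr)$, so that the minimum occurs at $\rho=(1^q)$ — forces $q!\leq2^{q-1}$ when $\ell=2$ and more generally pins $\ell$ and $q$ to a small range; the finitely many remaining pairs are then checked by hand. (Alternatively, in the spirit of the worked example preceding the theorem, one can skip hook lengths and instead use \cref{PW} to exhibit, for each $(\ell,q)\neq(2,2)$, a partition maximal in the dominance order for exactly one of the two products.)
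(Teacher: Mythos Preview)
Your reduction to $m=2$, $\pi=(\ell+1,1^{\ell-1})$, $\nu=(q^{\ell-1})\sqcup\rho$ is correct and in fact tidier than the paper's route: you pin down $\pi$ as a hook straight from the lexicographic maximum (the operative constraint is that $\nu_1=q\pi_1-(m-1)q\ell\geq1$, which with $|\pi|=m\ell$ and $\ell(\pi)=\ell$ forces $\pi_2=\cdots=\pi_\ell=1$; the condition $|\nu|=n$ that you cite is automatic and gives nothing), and then your count of parts equal to $n$ in the transpose-lexicographic maximum yields $m=2$. The paper instead first argues $m=2$ via a row-difference estimate, and only afterwards identifies the hook, which forces it to treat $\ell=2$ separately and rule out $\pi=(2,2)$ by hand.

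The genuine gap is the final step. Your hook-length identity
\[
\prod_{c=1}^{q}\prod_{j=1}^{\ell-1}(b_c+j)=\Bigl(\tfrac{(\ell!)^2}{2^{\ell-1}}\Bigr)^{\,q}
\]
is correctly derived, and the $2$-adic argument does eliminate all odd $\ell\geq3$ (Kummer's theorem makes the required total carry count equal to $-q\,s_2(\ell-1)<0$). But for even $\ell$ you offer only a sketch: the Schur-concavity claim that the left side is minimised at $\rho=(1^q)$ is asserted, not proved (the constraint is over distinct non-negative integers with fixed sum, not over a convex set), and you neither specify the ``small range'' of $(\ell,q)$ that survives nor perform the residual check. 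The paper sidesteps this arithmetic entirely. For $\ell\geq3$ it uses \cref{PW}: every constituent $s_\alpha$ of $s_\rho\circ s_{(\ell+1,1^{\ell-1})}$ satisfies $\alpha_1+\alpha_2\leq q(\ell+2)$, whereas an explicit plethystic semistandard tableau of shape $(2)^{(q^{\ell-1})\sqcup\rho}$ is exhibited with weight $\beta$ satisfying $\beta_1+\beta_2=q(\ell+2)+1$, producing (after passing to a dominance-maximal weight) a constituent of $s_\nu\circ s_{(2)}$ absent from the other side. For $\ell=2$ the paper uses associativity: since $s_{(3,1)}=s_{(1^2)}\circ s_{(2)}$, one has $s_\rho\circ s_{(3,1)}=(s_\rho\circ s_{(1^2)})\circ s_{(2)}$; as ${\rm max}_\succ(s_\rho\circ s_{(1^2)})=(q)\sqcup\rho=\nu$ and $s_\rho\circ s_{(1^2)}$ is inhomogeneous unless $\rho=(1^2)$ by \cref{homog}, one obtains $s_\nu\circ s_{(2)}\subsetneq s_\rho\circ s_{(3,1)}$ for every other $\rho$. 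Either of these closes your argument; the dimension route, as written, does not.
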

 \begin{proof}
From the set-up, we know $mn=pq$. We set $\ell(\pi)=c+1$ for some $c\geq 0$.  
 By assumption, we have that 
 \begin{align}\label{1}
 {\rm max}_{\succ}(s_\nu \circ s_{(m)})
 &=
  {\rm max}_{\succ}(s_\rho \circ s_\pi)\\ \label{2}
   {\rm max}_{\succ}(s_{\nu^M} \circ s_{(1^m)})
 &=
  {\rm max}_{\succ}(s_{\rho^P} \circ s_{\pi^T}).
 \end{align}
 As a warm-up, we first consider the case where $\pi$ is linear.  
 If $\mu=(m)$ and $\pi=(p)$  then (see Example~\ref{PWex}) \cref{2} says that
$(n^{m-1}) \sqcup (\nu^M)^T=(q^{p-1}) \sqcup (\rho^P)^T$. 
By comparing   widths  we deduce that      $q=n$. This   implies $m=p$ and then $\nu=\rho$.
  Now,  suppose that  $\mu=(m)$ and $\pi=(1^p)$.   
Then $ {\rm max}_{\succ}(s_\nu \circ s_{(m)})=(nm-n)+\nu$ which, as $m \geq 2$ and $\nu$ has size~$n$, has final column of  length~1. For   \cref{1} to hold,  the same to be true of  $ {\rm max}_{\succ}(s_\rho \circ s_(1^p))=(q^{p-1}) \sqcup \rho$; this implies $p=2$. Similarly, comparing the final columns of ${\rm max}_{\succ}(s_{\nu^M} \circ s_{(1^m)}) = (n^{m-1}) \sqcup \nu^M$ and $  {\rm max}_{\succ}(s_{\rho^P} \circ s_{(p)}) = (qp-q)+\rho^P$ also shows that $m=2$. Hence $n=q$ and we obtain a contradiction from comparing the widths of 
$(n) \sqcup \nu^M$ and $(q)+\rho^M$.

We now assume that $\pi$ is non-linear so   $\pi_1>1$ and $c>0$.  By \cref{2}, 
\begin{equation}\label{bigbox}
\color{black}(n^{m-1}) \sqcup  \nu^M 
\color{black}= (q\pi_1^T,q\pi_2^T,\dots, q\pi_{\pi_1-1}^T,q\pi_{\pi_1}^T-q+\rho_1^M,\rho_2^M,\dots  ).
\end{equation}
 Since $m\geq 2$   and $\pi_1>1$, it follows that $n=q\pi_1^T=q(c+1)$   and, as $mn=pq$, $p=(1+c)m$.  
\color{black}
   If $\nu^M=(n)$ then the left hand side of \cref{bigbox} is $(n^m)$.   
  Since  $q<n$, comparing the width in  \cref{bigbox} shows that $\rho^P=(q)$ and  that  $\pi = (m^{c+1})$.  
  This implies that ${\rm max}_\succ(s_\nu\circ s_{(m)})$ is a hook partition whereas
   ${\rm max}_\succ   ( s_{\rho}	\circ s_{ (m^{c+1} )}	)$ has second row of width at least $ q(m-1)>1$, a contradiction.  
   \color{black}
 Therefore we can assume that  $\nu^M\neq (n)$.  
Then  \cref{bigbox} implies  that the first $m-1$   rows of $\pi^T$ are all equal to $n/q=c+1$ and  therefore  
 $\pi=((m-1)^{c+1}) + \pi'$ for some $\pi'\vdash c+1$.  
 In particular, $\pi_1-\pi_2 \leq c+1$.  
We now consider \cref{1}: the difference between the first and second rows of   
$ {\rm max}_{\succ}(s_\nu \circ s_\mu)$ is 
$$\left((m-1)n+\nu_1\right)-\nu_2$$
whereas the difference between the first and second rows of $ {\rm max}_{\succ}(s_\rho \circ s_\pi)$ is 
 less than or equal to $q\times (\pi_1-\pi_2  +1)	 = n+q$.     
  Therefore the necessary inequality 
$$ (m-1)n+\nu_1 -\nu_2 \leq n+q$$
implies that $m=2$ (since $q<n$).  For the remainder of the proof $\mu=(2)$ and $\pi= (1^{c+1}) + \pi'
\vdash 2(c+1)$ and therefore $\rho^P=\rho$ and $\nu^M=\nu$.     

We first consider the case $c>1$. Here we have that $\ell(\pi)=c+1>2$ and so 
the difference between the first and second rows of $ {\rm max}_{\succ}(s_\rho \circ s_\pi)$ is 
 $q\times (\pi_1-\pi_2)=q(\pi'_1-\pi'_2)\le q(1+c)=n $. On the other hand, for   $ {\rm max}_{\succ}(s_\nu \circ s_{(m)})=(n)+\nu$ the difference is at least~$n$. For equality, we require $\pi'=(c+1)$, that is $\pi=(c+1, 1^c)$. 
Then \cref{1} becomes $(n)+\nu=(q(c+1)+q, q^{c-1}) \sqcup \rho$ and we find $\nu=(q^c) \sqcup \rho$.
We now employ the dominance ordering to examine the case 
 $$ 
\pi =(c+2,1^c) \qquad \nu=(q^c) \sqcup \rho.
 $$

A necessary condition for   $  \PStd( (c+2,1^{c})^{\rho}, \alpha))\neq \emptyset $  
  is that 
$\alpha_1+\alpha_2\leq q(c+3)$.  To see this, simply note that if $\SSTS\in  \PStd( (c+2,1^{c})^{\rho}, \alpha))$, then 
$$\SSTS : [\rho] \to \SStd_\NN((c+2,1^{c}))$$
and the maximum number of entries equal to 1 or 2 in a semistandard Young tableaux of shape $(c+2,1^{c})$ is equal to 
$(c+2)+1=c+3$ (the sum of the lengths of the first and second rows of $(c+2,1^{c})$).  
Thus $p(\rho, (c+2,1^c),\alpha) =0$ for any $\alpha$ such that $\alpha_1+\alpha_2 >q(c+3)$ by \cref{PW}.  
We shall now construct  a plethystic tableau $\SSTS \in \PStd((2)^{(q^{c}) \sqcup \rho},\beta)$ with $\beta_1+\beta_2>q(c+3)$. This tableau will either be of maximal possible weight or there  exists another plethystic tableau of the same shape but of weight $\beta' \rhd \beta$; in either case,  for a partition  for $\gamma \in \{\beta,\beta'\}$, $0 \neq p((q^c) \sqcup \rho,(2),\gamma )$ whereas  $p(\rho,(c+2,1^c),\gamma ) =0$  (by \cref{PW}), 
 providing us with the necessary contradiction.
  Let $\SSTT\in \PStd((2)^{ (q^c) \sqcup \rho}, \beta)$ be the plethystic tableau such that 
 $$\SSTT(a,b)=
\begin{cases}
\gyoung(2;2) &\text{if $(a,b)$ is the least dominant (lowest) removable node of $(q^{c}) \sqcup \rho$} \\
\gyoung(1;a) &\text{otherwise}.
\end{cases}
$$ 
This tableau has weight $\beta$ with $\beta_1=q(c+2)-1$ and $\beta_2=q+2$ and so $\beta_1+\beta_2=   
q(c+3)+1$ as required.

Finally, we consider the case $c=1$. Here 
$\mu=(2)$ and $\pi\vdash 2(c+1)=4$ is either $(3,1)$ or $(2,2)$.
 In the $(2^2)$ case, comparing the widths of the partition on the left and right   of \cref{1} we see that $\nu_1=0$, a contradiction.  
 In the $(3,1)$ case, comparison of maximal terms again reveals that $\nu=(q)  \sqcup \rho$. Now
$$
s_{\rho}\circ s_{(3,1)}=
s_{\rho}\circ (s_{(1^2)}\circ s_{(2)})=
(s_{\rho}\circ  s_{(1^2)})\circ s_{(2)}.
$$ 
We observe that  ${\rm max}_\succ(s_\rho\circ s_{(1^2)})=(q) \sqcup \rho$, but $s_\rho\circ s_{(1^2)}$ is decomposable unless $\rho=(1^2)$ by \cref{homog}. For $\rho \neq (1^2)$, we deduce that 
$s_{(q) \sqcup \rho}\circ s_{(2)}$ is properly contained in $s_{\rho}\circ s_{(3,1)}$. Thus we have $q=2$, $\rho=(1^2)$ and $\nu=(2,1^2)$, as required. 
    \end{proof}

We may conjugate (applying \cref{conjugate}) to complete the  case where $\mu$ is linear.

\begin{cor} \label{one_col}
 Let $\mu,\nu,  \pi,\rho$ be  partitions of $m,n,p,q>1$ respectively. 
 We suppose that   $\mu=(1^m)$.  
   If 
 $$
 s_\nu\circ s_\mu = s_\rho \circ s_\pi
 $$
 then either $\nu=\rho$ and $\mu=\pi$ or we are in the exceptional case 
 $$s_{(2,1^2)}\circ s_{(1^2)}  = 
s_{(1^2)}\circ s_{(2,1^2)}.  $$     
 \end{cor}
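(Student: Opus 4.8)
\emph{Proof plan.} The idea is to conjugate the given identity so as to reduce it to the single-row case already established in \cref{Rowena}. Since the $\omega$ involution is a ring automorphism of the ring of symmetric functions, and in particular injective, the hypothesis $s_\nu\circ s_{(1^m)}=s_\rho\circ s_\pi$ is equivalent to $\omega(s_\nu\circ s_{(1^m)})=\omega(s_\rho\circ s_\pi)$, so it suffices to rewrite both sides using \eqref{conjugate}.

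First I would record that, applying \eqref{conjugate} with inner partition $(1^m)$ (whose transpose is the single row $(m)$), one has $p(\nu,(1^m),\alpha)=p(\nu^M,(m),\alpha^T)$ for all $\alpha$, and hence $\omega(s_\nu\circ s_{(1^m)})=s_{\nu^M}\circ s_{(m)}$; the same computation applied to $s_\rho\circ s_\pi$, now with inner partition $\pi\vdash p$, gives $\omega(s_\rho\circ s_\pi)=s_{\rho^P}\circ s_{\pi^T}$. So the hypothesis becomes
$$
s_{\nu^M}\circ s_{(m)}=s_{\rho^P}\circ s_{\pi^T},
$$
which is exactly the situation treated in \cref{Rowena}: the left-hand inner partition is the single row $(m)$, and the partitions $\nu^M\vdash n$, $\pi^T\vdash p$, $\rho^P\vdash q$ all have size greater than~$1$.

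It then remains only to translate the two alternatives of \cref{Rowena} back through conjugation. In the generic alternative one obtains $\nu^M=\rho^P$ and $\pi^T=(m)$; the latter forces $\pi=(1^m)=\mu$ and $p=m$, so $m$ and $p$ have the same parity and $\nu^M=\rho^P$ yields $\nu=\rho$. In the exceptional alternative $s_{(2,1^2)}\circ s_{(2)}=s_{(1^2)}\circ s_{(3,1)}$ of \cref{Rowena}, I would read off $m=2$, $\nu^M=(2,1^2)$, $\pi^T=(3,1)$ and $\rho^P=(1^2)$; since $m=2$ and $p=|\pi^T|=4$ are both even, conjugating back is trivial and gives $\nu=(2,1^2)$, $\mu=(1^2)$, $\pi=(2,1^2)$, $\rho=(1^2)$, i.e.\ the asserted identity $s_{(2,1^2)}\circ s_{(1^2)}=s_{(1^2)}\circ s_{(2,1^2)}$. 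I do not expect any genuine obstacle here: \cref{Rowena} supplies all the content, and the only point needing care is the parity bookkeeping in the $\nu^M,\rho^P,\pi^T$ substitutions, which is in fact vacuous in the exceptional case because the relevant sizes are even.
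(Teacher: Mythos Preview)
Your proposal is correct and takes exactly the same approach as the paper, which simply states that one conjugates via \eqref{conjugate} to reduce to \cref{Rowena}. You have spelled out the parity bookkeeping that the paper leaves implicit, and your handling of both the generic and exceptional alternatives is accurate; in particular, the observation that $m=2$ and $p=4$ are both even, so that $\nu^M=\nu$ and $\rho^P=\rho$ in the exceptional case, is exactly what is needed.
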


Let $\mu,\nu,  \pi,\rho$ be arbitrary partitions of $m,n,p,q>1$ respectively.   
 We now consider what the condition
\begin{equation}\label{rowenaslabel}
 {\rm max}_{\succ}(s_\nu \circ s_\mu)
 =
  {\rm max}_{\succ}(s_\rho \circ s_\pi) 
\end{equation}
  tells us about this quadruple of partitions.  
  We first suppose that $\ell(\mu)=\ell(\pi)=k$.  This implies that $\ell(\nu)=\ell(\rho)=\ell$, say. Furthermore,
  \begin{align}\label{max}
\begin{split} &( n\mu_1,n\mu_2,\dots, n\mu_{k-1}, n\mu_k-n+\nu_1, \nu_2,  \dots, \nu_\ell)
\\=
 &( q\pi_1,q\pi_2,\dots, q\pi_{k-1}, q\pi_k-q+\rho_1, \rho_2,  \dots, \rho_\ell).
\end{split}\end{align}
We set $d=\gcd(n,q)$,  
$e=\gcd(m,p)$  and set $n=n'd$, $q=q'd$, $m=m'e$, $p=p'e$.   
Since 
$mn=pq$, we note that $m'n'ed=p'q'ed$ and so $m'n'=p'q'$.  
Since $m'$ and $p'$ are coprime, as are $n'$ and $q'$,  it follows that $m'=q'$ and $p'=n'$.  Thus 
$$
m=q'e \quad
n=n'd \quad 
q=q'd 
\quad 
p=n'e.
$$
From \cref{max}, we observe that $n\mu_i=\pi_i q$ implies $n'\mu_i = q' \pi_i$, and so we can set 
$\alpha_i:=\tfrac{\mu_i}{q'}
 =\tfrac{\pi_i} {n'}
 \in\NN
 $ for all $1\leq i \leq k-1$.  
Now, $\mu \vdash m=q'e$ and so the final row length satisfies
$$
\mu_k=q'e - \sum_{i=1}^{k-1} q' \alpha_i = q'
\underbrace{\left(e- \sum_{i=1}^{k-1}   \alpha_i \right) }_{\alpha_k}.
$$
We have a partition 
$(\alpha_1,\dots,\alpha_k)\vdash e$ with $q'\alpha=\mu$, and, in a similar fashion,  we deduce that  $n'\alpha=\pi$.   
Without loss of generality, we now assume that $n\geq q$.  We plug in our equalities 
$\pi=n'\alpha$ and $\mu=q'\alpha$ back into \cref{max} and to show  that
$$
\rho_i=\nu_i\textrm{ for $i \ge 2$ and }   \nu_1 = (n-q)+\rho_1.
$$
We immediately obtain the following corollary.  

\begin{cor}
 Let $\mu,\nu,  \pi,\rho$ be  partitions of $m,n,p,q>1$, respectively. 
 We suppose that   $\ell(\pi)=\ell(\mu)$.  If 
 $$
 s_\nu\circ s_\mu = s_\rho \circ s_\pi
 $$
 then  $\nu=\rho$ and $\mu=\pi$.  
\end{cor}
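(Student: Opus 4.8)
The statement is a short coda to the computation carried out just above: one more input, drawn from the transpose-lexicographic maximal term, finishes it. First I would dispose of the case in which $\mu$ or $\pi$ is linear. If (say) $\mu$ is a single row or column, apply \cref{Rowena} or \cref{one_col}; these give either the desired conclusion or one of the equalities $s_{(2,1^2)}\circ s_{(2)}=s_{(1^2)}\circ s_{(3,1)}$, $s_{(2,1^2)}\circ s_{(1^2)}=s_{(1^2)}\circ s_{(2,1^2)}$, and in both of the latter the two indexing partitions have different lengths, so the hypothesis $\ell(\mu)=\ell(\pi)$ rules them out. The same applies with the roles of the two products swapped if instead $\pi$ is linear. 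So we may assume $\mu$ and $\pi$ are both non-linear, in particular $w(\mu),w(\pi)\geq 2$.

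Next I would invoke the lexicographic analysis already performed: since $s_\nu\circ s_\mu=s_\rho\circ s_\pi$ forces \eqref{rowenaslabel} and $\ell(\mu)=\ell(\pi)=k$, the argument above produces a partition $\alpha$ with $\mu=q'\alpha$, $\pi=n'\alpha$, $m=q'e$, $p=n'e$, $n=n'd$, $q=q'd$, together with (after the harmless assumption $n\geq q$) the relations $\nu_i=\rho_i$ for $i\geq2$ and $\nu_1=(n-q)+\rho_1$. It therefore suffices to prove $n=q$: this forces $n'=q'$, hence $\mu=q'\alpha=n'\alpha=\pi$ (and $m=p$), and $\nu_1=\rho_1$, so $\nu=\rho$.

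To obtain $n=q$ I would pass to the transpose-lexicographic ordering. The hypothesis also gives ${\rm max}_{\succ_T}(s_\nu\circ s_\mu)={\rm max}_{\succ_T}(s_\rho\circ s_\pi)$; comparing the number of parts of the two sides settles everything. By the identity ${\rm max}_{\succ_T}(s_\nu\circ s_\mu)=({\rm max}_\succ(s_{\nu^M}\circ s_{\mu^T}))^T$ recorded above together with \cref{pppppppppp}, and using that $\mu$ non-linear makes $\mu^T$ have at least two parts, the number of parts of ${\rm max}_{\succ_T}(s_\nu\circ s_\mu)$ equals the first part of ${\rm max}_\succ(s_{\nu^M}\circ s_{\mu^T})$, which is $|\nu^M|\,(\mu^T)_1=n\,\ell(\mu)$; symmetrically the number of parts of ${\rm max}_{\succ_T}(s_\rho\circ s_\pi)$ is $q\,\ell(\pi)$. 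Since $\ell(\mu)=\ell(\pi)\geq1$, equality forces $n=q$, finishing the proof.

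There is no genuinely hard step here. The only point that deserves care is that the lexicographic maximal term alone cannot close the argument, since $mn=q'n'ed=pq$ holds automatically and the data $\mu=q'\alpha$, $\pi=n'\alpha$ with $n'\neq q'$ are consistent with everything the lexicographic analysis gives — for instance $s_{(4)}\circ s_{(2)}$ and $s_{(2)}\circ s_{(4)}$ share the lexicographic maximal term $(8)$. The extra invariant needed is the number of parts of the transpose-lexicographic maximal term, which supplies precisely the relation $n\,\ell(\mu)=q\,\ell(\pi)$.
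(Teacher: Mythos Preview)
Your proof is correct and follows essentially the same route as the paper: after the lexicographic analysis yields $\mu=q'\alpha$, $\pi=n'\alpha$, $\nu=\rho+(n-q)$, both you and the paper finish by comparing the width of ${\rm max}_\succ(s_{\nu^M}\circ s_{\mu^T})$ with that of ${\rm max}_\succ(s_{\rho^P}\circ s_{\pi^T})$ (your phrasing in terms of the number of parts of the transpose-lexicographic maximum is the same quantity) to obtain $n\,\ell(\mu)=q\,\ell(\pi)$ and hence $n=q$. Your explicit disposal of the linear case via \cref{Rowena} and \cref{one_col} is a welcome bit of extra care, since the identity ``width $=n\,\ell(\mu)$'' genuinely requires $\ell(\mu^T)\geq 2$; the paper glosses over this but the linear cases are already covered by the earlier results.
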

\begin{proof}
By the discussion above, we know that we are dealing with a quadruple
$$
 \mu=q'\alpha,
 \qquad
 \nu=\rho+(n-q),
 \qquad \pi=n'\alpha,	
 \qquad
 \rho	.	
$$
Comparing the width of the  partitions on the left  and right  of 
$$    {\rm max}_{\succ}(s_{\nu^M} \circ s_{\mu^T})=
{\rm max}_{\succ}(s_{\rho^P} \circ s_{\pi^T})$$
we deduce that  $\ell(\mu)n=\ell(\pi)q$.  Thus $n=q$, $\nu=\rho$, $q'=n'$ and thus 
$\mu=\pi$, as required.  
\end{proof}

We now consider the case 
 where the  lengths of the partitions $\mu$ and $\pi$ (and hence $\nu$ and $\rho$) differ.
 We suppose (without loss of generality) that $\ell(\mu)<\ell(\pi)$.  
 We set $\ell(\mu)=k$ and $\ell(\pi)=k+c$ for some $c\geq 1$.  Thus $\ell(\rho)+c=\ell(\nu)=\ell$, say.  
Observe that $
 {\rm max}_{\succ}(s_\nu \circ s_\mu)
 =
  {\rm max}_{\succ}(s_\rho \circ s_\pi) 
$ if and only if  the partitions 
$$\scalefont{0.9}\begin{array}{ccccccccccccccccccc}\label{max2}
\!\!\!\!\!(n\mu_1 &\dots& n\mu_{k-1}& n\mu_k-n+\nu_1& \nu_2&  \dots	&\nu_c			&\nu_{c+1}	&\nu_{c+2} &\dots & \nu_\ell)
\\
\!\!\!\!\!(q\pi_1&\dots& q\pi_{k-1}& q\pi_k 			&q\pi_{k+1}	&   \dots &q\pi_{k+c-1}	&q \pi_{k+c}-q+\rho_1	&\rho_2	&\dots  &		 \rho_{\ell-c}).
\end{array}
 $$
 coincide.  
We deduce that 
 \begin{equation}\label{use1}
\mu=q'(\alpha_1,\dots,\alpha_k),	    \qquad 
  \pi = n' (\alpha_1,\dots, \alpha_{k-1}) \sqcup  {(\pi_k,\dots, \pi_{k+c})} 
  \end{equation} 
for $  \alpha \vdash e  $,   $(\pi_k,\dots, \pi_{k+c})\vdash n'\alpha_k$  and 
\begin{equation}\label{use2} \nu=(q\pi_k - n(q'\alpha_{k}-1)) \sqcup q(\pi_{k+1},\dots, \pi_{k+c-1})\sqcup ( q(\pi_{k+c}-1)+\rho_1)\sqcup (\rho_2, \rho_3, \ldots \rho_{\ell-c}) 
  \end{equation}
and, in order for $\nu$ to be a partition, we need
    $$
q\pi_k - n(q'\alpha_{k}-1)  \geq q \pi_{k+1}
    $$
which, rearranging, gives 
   $$
q(\pi_k - \pi_{k+1})   \geq n(\mu_k-1).
    $$
We are now ready to complete our proof of Theorem A.

 \begin{thm} \label{Rowena}
 Let $\mu,\nu,  \pi,\rho$ be  partitions of $m,n,p,q>1$, respectively. 
 We suppose that both $\mu$ and $\pi$ are non-linear  and $\ell(\pi)>\ell(\mu)$.     If 
 $$
 s_\nu\circ s_\mu = s_\rho \circ s_\pi
 $$
 then   $\nu=\rho$ and $\mu=\pi$.
 \end{thm}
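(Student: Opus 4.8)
The hypothesis $\ell(\pi)>\ell(\mu)$ already forces $\mu\neq\pi$, so the real content is to show that under the standing assumptions the equality $s_\nu\circ s_\mu=s_\rho\circ s_\pi$ cannot occur at all; the asserted implication is then vacuously true. I keep the notation fixed just before the theorem: $\ell(\mu)=k$ (so $k\geq2$, since $\mu$ is non-linear), $\ell(\pi)=k+c$ for some $c\geq1$, and $n=n'd$, $q=q'd$ with $d=\gcd(n,q)$, $\gcd(n',q')=1$; by \cref{use1} one has $\mu=q'(\alpha_1,\dots,\alpha_k)$ and $\pi=n'(\alpha_1,\dots,\alpha_{k-1})\sqcup\beta$, where $\beta=(\pi_k,\dots,\pi_{k+c})$ is a partition of $n'\alpha_k$ with exactly $c+1$ parts, all positive.

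The first step is to extract a numerical identity from the transpose-lexicographic maximal terms: since $s_\nu\circ s_\mu=s_\rho\circ s_\pi$, we have ${\rm max}_{\succ_T}(s_\nu\circ s_\mu)={\rm max}_{\succ_T}(s_\rho\circ s_\pi)$, and by \cref{pppppppppp} the number of rows of this common partition equals the largest part of ${\rm max}_{\succ}(s_{\nu^M}\circ s_{\mu^T})$, namely $n$ times the length of $\mu$, i.e.\ $nk$; computed from the right-hand side it equals $q(k+c)$. Hence $nk=q(k+c)$, i.e.\ $n'k=q'(k+c)$, so
$$k(n'-q')=q'c,$$
and as $c\geq1$ this gives $n'>q'$ (in particular $n>q$). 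Now assume $c\geq2$. The requirement that $\nu$ be a partition gives the inequality $q(\pi_k-\pi_{k+1})\geq n(\mu_k-1)$ recorded before the theorem; since $\beta$ has $c+1$ positive parts summing to $n'\alpha_k$, we have $\pi_{k+1}\geq1$ and $\pi_k\leq n'\alpha_k-c$, so $\pi_k-\pi_{k+1}\leq n'\alpha_k-c-1$. Substituting this and $\mu_k=q'\alpha_k$ gives
$$n(q'\alpha_k-1)\leq q(n'\alpha_k-c-1),$$
and since $nq'\alpha_k=n'q'd\,\alpha_k=qn'\alpha_k$ the $\alpha_k$-terms cancel, leaving $n\geq q(c+1)$, i.e.\ $n'\geq q'(c+1)$. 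Combining with $q'(c+1)=q'c+q'=k(n'-q')+q'$ yields $n'\geq kn'-kq'+q'$, that is $(k-1)(n'-q')\leq0$; this is absurd, since $k\geq2$ and $n'>q'$.

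It remains to treat $c=1$, where the partition condition on $\nu$ only gives a weaker inequality than the one used above. Here I would instead run the two steps above on the conjugate identity $s_{\nu^M}\circ s_{\mu^T}=s_{\rho^P}\circ s_{\pi^T}$, with $\bar k:=\ell(\mu^T)=w(\mu)$ and $\bar c:=\ell(\pi^T)-\ell(\mu^T)=(n'-q')\alpha_1$ (using $w(\mu)=q'\alpha_1<n'\alpha_1=w(\pi)$ from \cref{use1}) playing the roles of $k$ and $c$; non-linearity of $\mu$ gives $\bar k=w(\mu)\geq2$, so whenever $\bar c\geq2$ the identical computation produces $(\bar k-1)(n'-q')\leq0$, a contradiction. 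The only situation escaping both runs of the argument is $c=\bar c=1$, which forces $n'=q'+1$, $\alpha=(1^k)$ and (by $k(n'-q')=q'c$) $k=q'$, so that $\mu$ is the $q'\times q'$ square. A short check then remains: comparing the parts of $\pi^T$ predicted by \cref{use1} for the conjugate identity with those read off $\pi=((q'+1)^{q'-1})\sqcup\beta$ forces $\beta=(2,q'-1)$, hence $q'\leq3$; for $q'=3$ the partition $\nu$ of \cref{use2} has negative first part, and for $q'=2$ only the pair $(\mu,\pi)=\big((2,2),(3,2,1)\big)$ with $\nu=(d)\sqcup\rho$, $\rho\vdash 2d$, remains, and this is separated from $s_\rho\circ s_{(3,2,1)}$ by comparing dominance-maximal constituents via \cref{PW}, exactly as at the close of the one-row case.

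I expect the genuine obstacle to lie in the endpoint $c=1$. The estimate $\pi_k-\pi_{k+1}\leq n'\alpha_k-c-1$ is sharp precisely when $\beta$ is a hook, and near that boundary the coarse row/column data of the maximal terms no longer separate the two plethysms; one is then forced into the dominance order and must carry out a plethystic-tableau construction in the spirit of the $\mu=(m)$ argument, and making that final comparison work uniformly over all $(\nu,\rho)$ with $\nu=(d)\sqcup\rho$ is the delicate point.
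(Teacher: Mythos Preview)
Your argument takes a genuinely different route from the paper's, and most of it is both correct and rather slick. The numerical identity $nk=q(k+c)$, read off from the number of rows of the common transpose-lexicographic maximum, combined with the partition inequality $q(\pi_k-\pi_{k+1})\geq n(\mu_k-1)$, really does dispose of every case with $c\geq 2$ in one line; the conjugate run handles $c=1$, $\bar c\geq 2$ just as cleanly (note that the identity $n\bar k=q(\bar k+\bar c)$ you need is simply $n\mu_1=q\pi_1$, which is the first-row equality in \cref{rowenaslabel}, so it comes for free). Your reduction of the residual case $c=\bar c=1$ to $q'=k$, $n'=k+1$, $\mu=(k^k)$, with the self-conjugacy constraint on $\pi$ forcing $k\in\{2,3\}$ and $k=3$ excluded by $\nu_1<0$, is also correct.

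The gap is exactly where you place it. The surviving family $\mu=(2,2)$, $\pi=(3,2,1)$, $\nu=(d)\sqcup\rho$ with $\rho\vdash 2d$, $\rho_1\leq d$ genuinely cannot be separated by the two extremal terms: both products have lex-maximum $(6d,4d)\sqcup\rho$ and transpose-lex-maximum its conjugate. You write ``a short check then remains'' and two sentences later concede that ``making that final comparison work uniformly \ldots\ is the delicate point''; that is an honest acknowledgement that the step is missing, and it is a real step. One needs either a plethystic-tableau construction exhibiting a dominance-maximal constituent of one product not dominated by any constituent of the other, or some independent invariant. (For orientation: already at $d=1$, evaluating in three variables gives $\dim(s_{(1^3)}\circ s_{(2,2)})=\binom{6}{3}=20\neq 28=\binom{8}{2}=\dim(s_{(1^2)}\circ s_{(3,2,1)})$, so the products do differ; but promoting this to a uniform argument in $d$ is work you have not done.)

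The paper does not split on the size of $c$. Instead it applies \cref{use1} to both the identity and its conjugate to deduce that every row \emph{and} every column of $\mu$ has length divisible by $q'$, so $\mu$ is tiled by $q'\times q'$ blocks; writing the quotient as $\gamma=(a_1^{b_1},\dots,a_x^{b_x})$ it then compares the two resulting descriptions of $\pi$ and $\pi^T$ and argues to a contradiction purely from these shape constraints, never invoking \cref{PW}. Your route is more economical where it succeeds, but the paper's structural analysis is what is intended to cover the boundary family you leave open.
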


 \begin{proof}
We set $\ell(\mu)=k\geq 2$ and $\ell(\pi)=k+c$ for $c\geq 1$.  
We first see what can be deduced from $ {\rm max}_{\succ}(s_\nu \circ s_\mu)
 =
  {\rm max}_{\succ}(s_\rho \circ s_\pi) 
$.  
From \cref{use1} we have that
\begin{equation}\label{max1}\mu=q'(\alpha_1,\dots,\alpha_k)	    \qquad 
  \pi = n' (\alpha_1,\dots, \alpha_{k-1}) \sqcup  {(\pi_k,\dots, \pi_{k+c})} 
  \end{equation} 
for $  \alpha \vdash e  $ and   $(\pi_k,\dots, \pi_{k+c})\vdash n'\alpha_k$,
and, from \cref{use2},  we  
deduce that $|\rho|<|\nu|$ and so $q=q'd<n'd=n$ which implies $q'<n'$.  
From \cref{use1} this implies that $\mu_1=q'\alpha_1<n'\alpha_1=\pi_1$ in other words $\ell(\mu^T) < \ell(\pi^T)$.

 We now see what can be deduced from $ {\rm max}_{\succ}(s_{\nu^M} \circ s_{\mu^T})
 =
  {\rm max}_{\succ}(s_{\rho^P} \circ s_{\pi^T}) 
$.  
We have already concluded that $\ell(\mu^T)< \ell(\pi^T)$.  Therefore applying \cref{use1} (but with the partitions 
$\mu^T$, $\nu^M$, $\pi^T$ and $\rho^P$) we deduce that 
\begin{equation}\label{max2}\mu^T=q'(\beta_1,\dots, \beta_{\mu_1})
\qquad
\pi^T= n'	(\beta_1,\dots, \beta_{\mu_1-1}) \sqcup (\pi^T_{\mu_1},\dots, \pi^T_{\pi_1})  \end{equation}
for some $\beta\vdash e$ and 
$ (\pi^T_{\mu_1},\dots, \pi^T_{\pi_1})\vdash n'\beta_{\mu_1}$. 
 
From \cref{max1,max2} we deduce that $\mu$ can be built from boxes of size $q'\times q'$.  In other words,
$$
\mu=q'(\underbrace{\gamma_1,\gamma_1,\dots, \gamma_1}_{q'},\underbrace{ \gamma_2,\gamma_2,\dots, \gamma_2}_{q'}, \dots).
$$
for some $\gamma\vdash m /q'^{2}$.  Since $\gamma$ might have repeated parts, we write $\gamma$ in the form
$$\gamma=(a_1^{b_1},a_2^{b_2},\dots, a_x^{b_x})$$
where $a_1>a_2>\dots >a_x$, so
$$
\gamma^T=({(b_1+\dots+b_x )}^{a_x},{(b_1+\dots+b_{x-1})}^{a_{x-1}-a_x},\dots, {b_1}^{a_1-a_2 }).$$
Now,  \cref{max1} reveals  that
\begin{equation}\label{pi}
\pi = (\underbrace{n'a_1,\dots ,n'a_1}_{b_1q'},\underbrace{n'a_2,\dots ,n'a_2}_{b_2q'},\dots, 
\underbrace{n'a_x,\dots ,n'a_x}_{b_xq'-1},
\color{black}\pi_k,
\color{black} \dots , \pi_{k+c})
\end{equation} where $(\pi_k,
  \dots , \pi_{k+c})\vdash n'a_x$ and, from \cref{max2},
\begin{align}\label{pit}\begin{split}
\pi^T = &( ({n'{(b_1+\dots+b_x )}})^{ q' a_x},({n'{(b_1+\dots+b_{x-1} )}})^ { q'( a_x-a_{x-1})}, \dots 
 (n'b_1)^{q'(a_1-a_2)-1}) \sqcup 
 \\ &
( \pi^T_{\mu_1},  \dots ,  \pi^T_{\pi_1})
 \end{split}\end{align}
where 
$(\pi^T_{\mu_1}, \dots ,  \pi^T_{\pi_1})\vdash n'b_1$.  
 By looking at the first row of $\pi^T$ we deduce that provided $x\ne 1$ the last part of $\pi$ is $q'a_x$ and that it appears with multiplicity $n'b_x$.  This implies that 
$$
(\color{black}
\pi_k, \color{black}
\dots, \pi_{k+c})
=
(\dots, \underbrace{q'a_x,\dots q'a_x}_{n'b_x})\vdash n'a_x.
$$
But the sum over these final $n'b_x$ rows is 
$ 
q'a_a \times n' b_x 
$ 
which implies $q'=1$ and $b_x=1$ and that 
$$
(\color{black}
\pi_k, \color{black}
\dots, \pi_{k+c})
=
( \underbrace{ a_x, \dots , a_x}_{n' })\vdash n'a_x.  
$$
Now we input this into \cref{pi} to deduce that 
$$\ell(\pi)
=b_1+\dots +b_x -1+n'.$$ 
  On the other hand by \cref{pit} we know that
$$\ell(\pi)=\pi^T_1= 
 n' (b_1+\dots+b_x )
$$
Therefore 
$$
n'(b_1+\dots+b_x -1)
=
b_1+b_2+\dots +b_x-1  
$$
and thus $n'=1$ or $b_1+b_2+\dots +b_x =1$.  If $n'=1$ then $n=q$, contrary to our earlier observation that $q<n$.
If  $b_1+b_2+\dots +b_x =1$, then $\ell(\gamma)=\ell(\alpha)=\ell(\mu)=1$, contrary to our assumption that $\mu$ is non-linear.

Finally, it remains to consider the $x=1$ case.  This is the case in which $\gamma=(a^b)$ is a rectangle.  
Here we have that 
$ 
\mu=q'(a^{bq'}), \mu^T=q'(b^{aq'}) 
  $
and 
\begin{align}\label{1111}
 \pi = ((n'a)^{q'b-1})\sqcup( \pi_k,\dots ,\pi_{k+c}) \quad &\text{for $( \pi_k,\dots ,\pi_{k+c})\vdash n'a$}
\\ 		\label{2222}
\pi  =  ({(q'a-1)}^{n'b})+( \pi_{\mu_1}^T,\dots ,\pi_{\pi_1}^T)^T \quad
&\text{for $( \pi_{\mu_1}^T,\dots ,\pi_{\pi_1}^T)\vdash n'b$.  }
\end{align}
Now, recall that $q'<n'$; 
and so 
$$
q'b-1<q'b <n'b
$$
and so the  rectangle  in  \cref{1111} is at least 2 rows shorter than that in \cref{2222}.
This implies that $q'=1$ and $a$ or $b=1$ and so $\mu$ is linear, a contradiction.  
\end{proof}

We have now classified all possible equalities between  products $
 s_\nu\circ s_\mu = s_\rho \circ s_\pi
 $ where neither, one, or both of $\pi$ and $\mu$ are linear partitions.  
This completes the proof of Theorem A.  

\begin{ack}
We would like to thank Cedric Lecouvey for
 bringing to our attention the question of unique factorisability of products of Schur  functions and in particular for introducing us to  Rajan's  result for Littlewood--Richardson products.  
 
\end{ack}

\bibliographystyle{amsalpha}


 \end{document}